\theoremstyle{plain}
\newtheorem{theorem}{Theorem}[section]
\newtheorem{lemma}[theorem]{Lemma}
\newtheorem{corollary}[theorem]{Corollary}
\newtheorem{proposition}[theorem]{Proposition}
\theoremstyle{definition}
\newtheorem{example}[theorem]{Example}
\newtheorem{definition}[theorem]{Definition}
\newcommand{\GL}{{\rm GL}}
\newcommand{\s}{\mathcal}
\newcommand{\ppd}{\mathrm{ppd}}
\newcommand{\lcm}{{\rm lcm}}
\newcommand{\ord}[2]{{\rm ord}\mathopen{}\left(#1; #2\right)\mathclose{}} 
\renewcommand{\mod}[1]{\,({\rm mod}\;{#1})}
\begin{document}

\title{Number of irreducible polynomials whose compositions with monic monomials have large irreducible factors}
\author{Sabina B. Pannek}
\date{}

\maketitle

\begin{abstract}
Given a prime power~$q$ and positive integers $m,t,e$ with $e > mt/2$, we determine the number of all monic irreducible polynomials $f(x)$ of degree~$m$ with coefficients in $\mathbb{F}_q$ such that $f(x^t)$ contains an irreducible factor of degree~$e$. Polynomials with these properties are important for justifying randomised algorithms for computing with matrix groups.
\end{abstract}

\noindent\textbf{Mathematics Subject Classification (2010):} 12E05; 11T06; 20P05 \\[5pt]
\noindent\textbf{Keywords:}
Counting irreducible polynomials; large irreducible factors;\\ algorithms for matrix groups

\section{Introduction}\label{s:intro}

Throughout this paper let $q$ be a prime power, let $\mathbb{F}_q$ denote the finite field of size~$q$, and let $\mathbb{F}_q[x]$ be the ring of all polynomials with coefficients in~$\mathbb{F}_q$. The set of all positive integers is denoted by $\mathbb{N}$. 

Generalising the notion of irreducible polynomials in~$\mathbb{F}_q[x]$, we refer to $f \in \mathbb{F}_q[x]$ as \emph{$t$-hyper-irreducible} ($t \in \mathbb{N}$) if $f(x^t)$ is irreducible over~$\mathbb{F}_q$.
Thus, ``$1$-hyper-irreducible'' simply means ``irreducible''. If $f\in \mathbb{F}_q[x]$ is reducible, then $f(x^t)$ is reducible for all $t \in \mathbb{N}$. This shows that $t$-hyper-irreducible polynomials are irreducible.

\subsection{Statement of main results}

While irreducible polynomials of degree~$m$ over~$\mathbb{F}_q$ exist for every positive integer~$m$ (see \cite[Corollary~$2.11$]{MR1429394}), this is not true for $t$-hyper-irreducible polynomials. Our first theorem sheds light on when $\mathbb{F}_q[x]$ contains $t$-hyper-irreducible polynomials of a given degree. In fact it reveals even more, specifying all triples~$(m,t,e) \in \mathbb{N}^3$ with $e > mt/2$ for which $\mathbb{F}_q[x]$ contains an irreducible polynomial~$f$ such that $\deg(f) = m$ and $f(x^t)$ has an irreducible factor of degree~$e$. Such polynomials are referred to as \emph{almost $t$-hyper-irreducible}. Note that any $t$-hyper-irreducible polynomial is almost $t$-hyper-irreducible. As outlined in Subsection~$\ref{ss:motivation}$, polynomials with these properties are relevant for designing efficient algorithms for exploring properties of matrix groups.

We characterise the existence of almost $t$-hyper-irreducible polynomials using the expression~$\ord{q}{r}$, which means the smallest positive integer~$n$ such that $q^n-1$ is divisible by $r$.

\begin{theorem}\label{t:existence:1}
	Let $m,t,e \in \mathbb{N}$ satisfy $e > mt/2$.
	Then $\mathbb{F}_q[x]$ contains an irreducible polynomial~$f$ such that $\deg(f) = m$ and $f(x^t)$ has an irreducible (over~$\mathbb{F}_q$) factor of degree~$e$ if and only if 
			\begin{align}
				\gcd(t,q) =1 \text{ and } \ord{q}{(q^m-1)t} = e. \label{eq:t:existence}
			\end{align}
\end{theorem}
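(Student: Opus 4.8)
The plan is to reformulate everything in terms of a single element of $\overline{\mathbb{F}_q}$. There is an irreducible $f\in\mathbb{F}_q[x]$ of degree $m$ for which $f(x^t)$ has an irreducible factor of degree $e$ if and only if there exists $\beta$ in some finite extension of $\mathbb{F}_q$ with $[\mathbb{F}_q(\beta):\mathbb{F}_q]=e$ and $[\mathbb{F}_q(\beta^t):\mathbb{F}_q]=m$: if such an $f$ is given and $\beta$ is a root of its degree-$e$ factor, then $\beta^t$ is a root of $f$; conversely, if $\beta$ is as stated, then the minimal polynomial $f$ of $\beta^t$ is irreducible of degree $m$, and since $f(\beta^t)=0$ the minimal polynomial of $\beta$---irreducible of degree $e$---divides $f(x^t)$. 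Separately, the coprimality condition is necessary: if the characteristic $p$ of $\mathbb{F}_q$ divides $t$, write $t=p^as$ with $a\ge1$ and $p\nmid s$; using that Frobenius is surjective on $\mathbb{F}_q$ one can write $f(x^t)=h(x)^{p^a}$ with $h\in\mathbb{F}_q[x]$ of degree $ms=mt/p^a\le mt/2$, so every irreducible factor of $f(x^t)$ has degree at most $mt/2<e$, a contradiction. Hence from now on I may assume $\gcd(t,q)=1$.

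For the implication ``$\ord{q}{(q^m-1)t}=e$ implies such an $f$ exists'', note $\gcd((q^m-1)t,q)=1$, so $\overline{\mathbb{F}_q}$ contains a primitive $(q^m-1)t$-th root of unity $\beta$. It lies in $\mathbb{F}_{q^N}$ exactly when $\ord{q}{(q^m-1)t}=e$ divides $N$, so $[\mathbb{F}_q(\beta):\mathbb{F}_q]=e$; moreover $\beta^t$ has multiplicative order $(q^m-1)t/\gcd((q^m-1)t,t)=q^m-1$, so it generates $\mathbb{F}_{q^m}^{\times}$ and $[\mathbb{F}_q(\beta^t):\mathbb{F}_q]=m$. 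By the first paragraph, the minimal polynomial of $\beta^t$ is the desired $f$. (This direction does not use $e>mt/2$.)

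It remains to show that the existence of such an $f$ forces $\ord{q}{(q^m-1)t}=e$. Choose $\beta$ with $[\mathbb{F}_q(\beta):\mathbb{F}_q]=e$ and $[\mathbb{F}_q(\beta^t):\mathbb{F}_q]=m$, and set $\alpha=\beta^t$; the case $\alpha=0$ (which forces $f=x$, $m=1$, $e=t=1$) is checked directly, so assume $\alpha\ne0$. Since $\mathbb{F}_{q^m}=\mathbb{F}_q(\alpha)\subseteq\mathbb{F}_q(\beta)=\mathbb{F}_{q^e}$ we get $m\mid e$; write $e=m\ell$, so $\beta$ has exactly the $\ell$ conjugates $\beta,\beta^{q^m},\dots,\beta^{q^{m(\ell-1)}}$ over $\mathbb{F}_{q^m}$. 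Put $\zeta=\beta^{q^m-1}$; then $\zeta^t=\alpha^{q^m-1}=1$ because $\alpha\in\mathbb{F}_{q^m}^{\times}$, so $\zeta$ is a $t$-th root of unity. Let $\nu=\operatorname{ord}(\zeta)$, a divisor of $t$. An easy induction on $i$ gives $\beta^{q^{mi}}=\beta\,\zeta^{(q^{mi}-1)/(q^m-1)}$, so all $\ell$ (distinct) conjugates of $\beta$ over $\mathbb{F}_{q^m}$ lie in the coset $\beta\langle\zeta\rangle$, whence $\ell\le\nu$.

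The main obstacle is to extract enough from the hypothesis $e>mt/2$, and the key point is that it forces $\zeta$ to be a \emph{primitive} $t$-th root of unity. Indeed $e>mt/2$ gives $\ell>t/2$, hence $\nu\ge\ell>t/2$; but the only divisor of $t$ exceeding $t/2$ is $t$, so $\nu=t$. Now $\beta^{q^{m\ell}}=\beta$ (as $\beta\in\mathbb{F}_{q^e}$) together with the formula for the conjugates yields $\zeta^{(q^e-1)/(q^m-1)}=1$, so $t=\nu$ divides $(q^e-1)/(q^m-1)$, i.e.\ $(q^m-1)t\mid q^e-1$, and therefore $E:=\ord{q}{(q^m-1)t}\le e$. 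Conversely, from $\zeta^t=1$ we get $\beta^{(q^m-1)t}=1$, so $\operatorname{ord}(\beta)$ divides $(q^m-1)t$, which in turn divides $q^E-1$; hence $\beta\in\mathbb{F}_{q^E}$ and $e=[\mathbb{F}_q(\beta):\mathbb{F}_q]\le E$. The two inequalities give $E=\ord{q}{(q^m-1)t}=e$, which completes the argument.
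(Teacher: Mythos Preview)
Your proof is correct and takes a route genuinely different from the paper's. The paper factors the argument through an auxiliary Proposition~\ref{p:existence:2}, which characterises when $f(x^t)$ has an irreducible factor of degree~$e$ in terms of the multiplicative order $|\omega|$ of a root of~$f$: namely, $\gcd(q,t)=1$ and $\ord{q}{|\omega|t}=e$. That proposition in turn rests on Lemma~\ref{l:poly:6} (producing a root of $f(x^t)$ of order exactly $|\omega|t$) and on the inequality $\ord{q}{rs}\le\ord{q}{r}\,s$ from Lemma~\ref{l:ord:1}$(f)$; the proof of Theorem~\ref{t:existence:1} then combines $e=\ord{q}{|\omega|t}\mid\ord{q}{(q^m-1)t}\le mt$ with $e>mt/2$ to conclude. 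Your argument is more self-contained: you pass directly to a root~$\beta$ of the degree-$e$ factor, introduce $\zeta=\beta^{q^m-1}$, observe that the $\mathbb{F}_{q^m}$-conjugates of~$\beta$ all lie in the single coset $\beta\langle\zeta\rangle$, and use $e>mt/2$ to force $\zeta$ to be a \emph{primitive} $t$-th root of unity, after which both divisibilities $(q^m-1)t\mid q^e-1$ and $\mathrm{ord}(\beta)\mid(q^m-1)t$ drop out. Both proofs ultimately hinge on the observation that a divisor of~$t$ exceeding $t/2$ must equal~$t$, but yours avoids the chain of lemmas about $\ord{q}{\cdot}$ and about roots of $f(x^t)$; the paper's route, on the other hand, isolates Proposition~\ref{p:existence:2} as a reusable ingredient for its later counting results.
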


Hence, $t$-hyper-irreducible polynomials of degree~$m$ exist in~$\mathbb{F}_q[x]$ if and only if $t$ and $q$ are coprime and $\ord{q}{(q^m-1)t} = mt$. As shown in Corollary~$\ref{c:existence:3}(b)$, this is equivalent to $q^m-1$ being divisible by $\gcd(t,4) \prod_{i=1}^\ell t_i$, where $t_1, \dots, t_\ell$ are the distinct odd prime divisors of~$t$.

\begin{definition}\label{d:num t-hyper-irred}
	For $m,t \in \mathbb{N}$ we write $N_q^*(m,t)$ to denote the number of all monic $t$-hyper-irreducible polynomials~$f(x) \neq x$ of degree~$m$ over~$\mathbb{F}_q$. Further, we define $N_q^*(m) = N_q^*(m,1)$.
\end{definition}

A formula for $N_q^*(m)$ is known and dates back to Gau\ss{}~\cite[p.~$611$]{MR0188045} who proved it for $q$ prime even though his arguments also hold for $q$ being a prime power. We have
	\begin{align}
		N_q^*(m) = \frac{1}{m} \sum_{m_0 \mid m} \mu(m_0)(q^{m/m_0}-1),\label{eq:N_q(m)}
	\end{align}
where $\mu:\mathbb{N} \rightarrow \{-1,0,1\}$ is the \emph{Moebius function} defined by
	\begin{align}
		\mu(n) = \begin{cases}
							1, & \text{if }n=1,\\
							(-1)^k, & \text{if $n$ is the product of $k$ distinct primes},\\
							0, & \text{if $n$ is divisible by the square of a prime}.
		\end{cases}\label{eq:moebius}
	\end{align}

Our next theorem generalises Gau\ss{}' result presenting a formula for the number of all monic $t$-hyper-irreducible polynomials $f(x) \neq x$ over~$\mathbb{F}_q$ of some given degree, assuming that any exist.
We also specify a good upper and lower bound for that value.

Let $\varphi:\mathbb{N}\rightarrow \mathbb{N}$ denote Euler's totient~function.
Recall that integers $m,t \in \mathbb{N}$ satisfying $N_q^*(m,t) \neq 0$ are characterised in Theorem~$\ref{t:existence:1}$ and Corollary~$\ref{c:existence:3}(b)$.

\begin{theorem}\label{t:number1:1}
	Let $m,t \in \mathbb{N}$ be such that $N_q^*(m,t) \neq 0$.
		\begin{enumerate}[$(a)$]	
			\item Let $J = \Bigl\{j \in \mathbb{N} \mid j \text{ divides } m,\, \gcd\Bigl( \frac{q^m-1}{q^{m/j}-1}, t \Bigr) =1\Bigr\}$. Then
				\[ N_q^*(m,t) = \frac{\varphi(t)}{mt} \sum_{j \in J} \mu(j)(q^{m/j}-1).\]
			\item We have
				\[ \frac{\varphi(t) (q^m-1)}{t (m+1)} \leq N_q^*(m,t) \leq \frac{\varphi(t)(q^m-1)}{tm}. \]%
		\end{enumerate}%
\end{theorem}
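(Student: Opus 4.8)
The plan is to recast the count in terms of elements of $\mathbb{F}_{q^m}$, apply the classical irreducibility criterion for binomials $x^t-\gamma$ over a finite field, and finish with a Möbius inversion over the degree; the bounds in $(b)$ then drop out of the explicit formula in $(a)$, the lower one after a short estimate.

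First I would set up the correspondence. If $f(x)\neq x$ is monic of degree $m$ and $f(x^t)$ is irreducible over $\mathbb{F}_q$, then $f$ is irreducible (as observed in the introduction), hence the minimal polynomial of some $\gamma\in\mathbb{F}_{q^m}^*$ of degree $m$ over $\mathbb{F}_q$, and I claim $f(x^t)$ is irreducible over $\mathbb{F}_q$ if and only if $x^t-\gamma$ is irreducible over $\mathbb{F}_{q^m}$. Indeed, a root $\alpha$ of $f(x^t)$ satisfies $\alpha^t=\gamma'$ for some root $\gamma'$ of $f$, and since $\mathbb{F}_q(\gamma')=\mathbb{F}_{q^m}\subseteq\mathbb{F}_q(\alpha)=\mathbb{F}_{q^m}(\alpha)$, irreducibility of $f(x^t)$ (degree $mt$) forces $[\mathbb{F}_{q^m}(\alpha):\mathbb{F}_{q^m}]=t$, so $x^t-\gamma'$ — hence $x^t-\gamma$, after a Galois conjugation — is irreducible over $\mathbb{F}_{q^m}$; conversely, if $x^t-\gamma$ is irreducible over $\mathbb{F}_{q^m}$ then a root of it has degree $mt$ over $\mathbb{F}_q$ and is a root of the monic degree-$mt$ polynomial $f(x^t)$, which is therefore its minimal polynomial. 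As the $m$ roots of such an $f$ are interchangeable and satisfy the same conditions, this yields
\[ N_q^*(m,t)=\tfrac{1}{m}\,\#\bigl\{\gamma\in\mathbb{F}_{q^m}^*:\ \deg_{\mathbb{F}_q}(\gamma)=m,\ x^t-\gamma\text{ irreducible over }\mathbb{F}_{q^m}\bigr\}. \]

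Next I would apply the standard binomial criterion (for $t\ge 2$; the case $t=1$ is Gauss's classical formula $N_q^*(m)$): $x^t-\gamma$ is irreducible over $\mathbb{F}_{q^m}$ iff every prime $p\mid t$ divides the multiplicative order $\mathrm{ord}(\gamma)$ but not $(q^m-1)/\mathrm{ord}(\gamma)$, and $q^m\equiv1\pmod{4}$ when $4\mid t$. Since $N_q^*(m,t)\neq0$ the above set is nonempty, which forces every prime divisor of $t$ to divide $q^m-1$ (otherwise "$p\mid\mathrm{ord}(\gamma)$" is unsatisfiable) and forces $q^m\equiv1\pmod{4}$ when $4\mid t$. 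Granting these consequences, the criterion collapses, for every $\gamma\in\mathbb{F}_{q^m}^*$, to the single condition $\gcd\bigl((q^m-1)/\mathrm{ord}(\gamma),t\bigr)=1$, i.e.\ $N_t\mid\mathrm{ord}(\gamma)$, where $N_t$ denotes the largest divisor of $q^m-1$ whose prime factors all divide $t$ (so $\varphi(N_t)/N_t=\varphi(t)/t$). Now for each $d\mid m$ put $H(d)=\#\{\gamma\in\mathbb{F}_{q^d}^*:N_t\mid\mathrm{ord}(\gamma)\}$ and let $G(d)$ count those of degree exactly $d$. Counting in the cyclic group $\mathbb{F}_{q^d}^*$ gives $H(d)=\tfrac{\varphi(t)}{t}(q^d-1)$ if $N_t\mid q^d-1$ and $H(d)=0$ otherwise, while for $j\mid m$ the condition $N_t\mid q^{m/j}-1$ is exactly $\gcd\bigl(\tfrac{q^m-1}{q^{m/j}-1},t\bigr)=1$, i.e.\ $j\in J$. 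From $H(d)=\sum_{d'\mid d}G(d')$ and Möbius inversion, $mN_q^*(m,t)=G(m)=\sum_{j\mid m}\mu(j)H(m/j)=\tfrac{\varphi(t)}{t}\sum_{j\in J}\mu(j)(q^{m/j}-1)$, which proves $(a)$.

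For $(b)$ the upper bound is immediate from $G(m)\le H(m)=\tfrac{\varphi(t)}{t}(q^m-1)$. For the lower bound, write $G(m)=H(m)-\sum_{d\mid m,\,d<m}G(d)$; the subtracted quantity counts the $\gamma$ with $N_t\mid\mathrm{ord}(\gamma)$ lying in the proper subfields $\mathbb{F}_{q^{m/p}}$ ($p\mid m$ prime), and the goal is to bound it by $\tfrac{1}{m+1}H(m)$, which gives $G(m)\ge\tfrac{m}{m+1}H(m)$ and the claim. This estimate is the main obstacle: bounding $\sum_{d}G(d)$ crudely by $\sum_{d}H(d)$ is too wasteful — it already fails for $q=2$, $m\in\{4,6\}$ — so one must exploit that the subfields $\mathbb{F}_{q^{m/p}}$ overlap, that $J$ is closed under passing to divisors, and, in the extremal situation $J=\{j:j\mid m\}$ (where the estimate reduces to a classical-type lower bound on the number of monic irreducibles of degree $m$, such as $N_q(m)\ge\tfrac{q^m-1}{m+1}$, provable by a short divisor-sum argument), a handful of small pairs $(q,m)$ must be checked directly.
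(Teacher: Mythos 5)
Part $(a)$ of your proposal is correct, and it is essentially the paper's own argument in different packaging: your passage from polynomials to roots of degree~$m$, the classical binomial criterion for $x^t-\gamma$ over $\mathbb{F}_{q^m}$ (Lidl--Niederreiter, Theorem~3.75, which the paper recovers inside Corollary~\ref{c:existence:3}$(a)$ via Proposition~\ref{p:ord:3}), the simplification of the criterion to $\gcd\bigl((q^m-1)/\vert\gamma\vert,t\bigr)=1$ using $N_q^*(m,t)\neq 0$, the count $H(d)=\frac{\varphi(t)}{t}(q^d-1)$ in the cyclic group $\mathbb{F}_{q^d}^*$ (the paper's Lemma~\ref{l:more:3}), and M\"obius inversion in place of the paper's explicit inclusion--exclusion over maximal subfields. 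The translation ``$N_t\mid q^{m/j}-1 \iff j\in J$'' is also right, so $(a)$ and the upper bound in $(b)$ go through.

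The genuine gap is the lower bound in $(b)$. You correctly observe that the crude estimate $\sum_{d\mid m,\,d<m}G(d)\le\sum_{d\mid m,\,d<m}H(d)$ is too weak (for $t=1$ the pairs $(q,m)=(2,2),(2,4),(2,6)$ are equality cases of the claimed bound), but what you offer instead --- ``exploit that the subfields overlap, that $J$ is closed under divisors, reduce the extremal case $J=\{j:j\mid m\}$ to $N_q^*(m)\ge(q^m-1)/(m+1)$ and check a handful of small pairs'' --- is a programme, not a proof: you never reduce the general-$t$ inequality $\sum_{d\mid m,\,d<m}G(d)\le H(m)/(m+1)$ to anything you actually establish, and it is exactly this reduction that needs an idea. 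The paper closes it as follows: with $r$ the product of the distinct primes in $J$, the square-free members of $J$ are precisely the divisors of $r$ (Lemma~\ref{l:more:4}, which uses the closure of $J$ under divisors), so the formula of part $(a)$ collapses to $N_q^*(m,t)=\frac{\varphi(t)}{mt}\,r\,N^*_{q^{m/r}}(r)$, a Gau\ss{}-type count over the intermediate field $\mathbb{F}_{q^{m/r}}$; then $r\,N^*_{q^{m/r}}(r)$ counts the elements of $\mathbb{F}_{q^m}^*$ lying in no proper subfield containing $\mathbb{F}_{q^{m/r}}$, hence is at least $m N_q^*(m)$ (Lemma~\ref{l:poly:5}$(a)$), and finally $N_q^*(m)\ge(q^m-1)/(m+1)$ is proved separately (Lemmas~\ref{l:poly:4} and~\ref{l:poly:5}$(b)$, where $(m,q)\in\{(2,2),(4,2),(6,2)\}$ are treated by direct computation and turn out to be equality cases). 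Until you supply this reduction, or a complete substitute for it, the lower bound in $(b)$ remains unproved.
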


Finally, we demonstrate how to deduce the number of almost $t$-hyper-irreducible polynomials which are not $t$-hyper-irreducible from the special case of $t$-hyper-irreducible polynomials covered in Theorem~$\ref{t:number1:1}$. For a natural number~$t$ and a prime~$s$, let $(t)_s$ be the \emph{$s$-part} of~$t$, that is the largest power of $s$ dividing~$t$. We write $(t)_{s'} = t/(t)_s$ and call $(t)_{s'}$ the \emph{$s'$-part} of~$t$.
Note that, if $s$ does not divide $t$, then $(t)_s = 1$ and $(t)_{s'} = t$.

\begin{theorem}\label{t:number2:1}
	Let $m,t,e \in \mathbb{N}$ satisfy $mt/2 < e < mt$ and $\eqref{eq:t:existence}$.
	Then $m\mid e$, the integer~$s = t/\gcd(e/m,t)$ is an odd prime, and the number of all monic, irreducible polynomials~$f(x) \neq x$ in $\mathbb{F}_q[x]$ such that $\deg(f) = m$ and $f(x^t)$ contains an irreducible (over~$\mathbb{F}_q$) factor of degree~$e$ is given by $N_q^*\bigl(m, (t)_{s'}\bigr)$.	
\end{theorem}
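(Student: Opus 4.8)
The plan is to reduce the statement to an arithmetic fact about $Q:=q^m$, show that $s$ is an odd prime, reduce the count to a statement about orders of roots, and then invoke Theorem~\ref{t:number1:1}.

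\emph{Reductions.} Since $q^m-1$ divides $(q^m-1)t$, the multiplicative order $m=\ord{q}{q^m-1}$ divides $\ord{q}{(q^m-1)t}=e$, so $m\mid e$; set $k=e/m$. As $m\mid e$, the divisibility $(q^m-1)t\mid q^e-1$ is equivalent to $t\mid(Q^k-1)/(Q-1)$, whence $k$ is the least positive integer with $t\mid(Q^k-1)/(Q-1)$ and $mt/2<e<mt$ becomes $t/2<k<t$. I would also use the standard dictionary: monic irreducible $f\neq x$ of degree $m$ correspond to length-$m$ Frobenius orbits on $\mathbb{F}_{q^m}^{*}$, and if $\alpha$ is a root of such an $f$, of multiplicative order $d$ (so $d\mid q^m-1$ and $\ord{q}{d}=m$), then the degrees of the irreducible factors of $f(x^t)$ are precisely the numbers $\ord{q}{D}$ as $D$ ranges over the multiplicative orders of the $t$-th roots of $\alpha$ in $\overline{\mathbb{F}_q}$; the largest of those orders is $dt$, so the largest such factor degree is $\ord{q}{dt}$, and the overall maximum over all $f$ (attained at $d=q^m-1$) equals $\ord{q}{(q^m-1)t}=e$. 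Hence $f(x^t)$ has an irreducible factor of degree $e$ iff $\ord{q}{dt}=e$, and, for any $t'$, $f(x^{t'})$ is irreducible iff $\ord{q}{dt'}=mt'$.

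\emph{$s$ is an odd prime.} Write $t=t_1t_2$ with $t_2$ the largest divisor of $t$ coprime to $q^m-1$; then $t_2$ is odd (if $q$ is even, $t$ is odd; if $q$ is odd, $2\mid q^m-1$). Lifting the exponent gives $k=\lcm\!\bigl(\ord{Q}{t_2},B,C\bigr)$, with $B$ the odd part of $t_1$ and $C$ an explicit power of $2$ prescribed by $v_2(t)$ and $q^m\bmod 4$. The crux is a squeeze with $t/2<k<t$: (i) $\ord{Q}{t_2}<t_2$ whenever $t_2>1$; (ii) $t_2=1$ is impossible, since then $k$ would be a proper divisor of $t$, giving $k\le t/2$; (iii) $t_2$ cannot have two or more distinct prime divisors, for then parity and coprimality of multiplicative orders modulo a prime force $\ord{Q}{t_2}<t_2/2$, contradicting $k>t/2$; (iv) a closer look at the prime power $\ell^a$ exactly dividing $t_2$ shows $Q$ is a primitive root modulo $\ell$ with $\gcd(\ell-1,t_1)=1$ and $\ell^2\nmid Q^{\ell-1}-1$, while also $\lcm(B,C)=t_1$. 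Consequently $t_2=s^a$ with $s:=\ell$ an odd prime, $(t)_s=t_2$, $(t)_{s'}=t_1$, $\ord{Q}{s^a}=(s-1)s^{a-1}$, $k=\ord{Q}{s^a}\cdot t_1$ (the two factors being coprime), and $\gcd(k,t)=t/s$. I expect this case analysis — chiefly ruling out two prime divisors of $t_2$ and controlling the $2$-part — to be the principal obstacle.

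\emph{The counting.} By the end of the Reductions step, and since $t=s^at_1$ with $t'=t_1=(t)_{s'}$, it suffices to prove, for every $d\mid q^m-1$ with $\ord{q}{d}=m$,
\[ \ord{q}{dt}=e\quad\Longleftrightarrow\quad\ord{q}{dt_1}=mt_1 . \]
Because $s\nmid q^m-1$, $\gcd(s^a,dt_1)=1$, so $\ord{q}{dt}=\ord{q}{s^a\cdot dt_1}=\lcm\!\bigl(\ord{q}{s^a},\ord{q}{dt_1}\bigr)$; moreover $\lcm\!\bigl(m,\ord{q}{s^a}\bigr)=m\,\ord{Q}{s^a}=m(s-1)s^{a-1}$ and $e=mk=m(s-1)s^{a-1}t_1$. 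For ``$\Leftarrow$'' one checks prime by prime that $\lcm\!\bigl(\ord{q}{s^a},mt_1\bigr)=e$ (using $\gcd(\ell-1,t_1)=\gcd(s,t_1)=1$ to keep the contributions of $(s-1)s^{a-1}$ and $t_1$ separate). For ``$\Rightarrow$'': from $\lcm\!\bigl(\ord{q}{s^a},\ord{q}{dt_1}\bigr)=e$ and $m\mid\ord{q}{dt_1}$, comparing valuations at a prime $p\mid t_1$ — where $p\mid q^m-1$, $p\neq s$, $p\nmid s-1$, so $v_p(\ord{q}{s^a})\le v_p(m)$ — forces $v_p(\ord{q}{dt_1})=v_p(m)+v_p(t_1)$; together with $v_p(\ord{q}{dt_1})\ge v_p(m)$ for $p\nmid t_1$ this yields $mt_1\mid\ord{q}{dt_1}$, and since $\ord{q}{dt_1}$ is the degree of an irreducible factor of the degree-$mt_1$ polynomial $f(x^{t_1})$ it is $\le mt_1$, so $\ord{q}{dt_1}=mt_1$. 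Thus the polynomials counted in the theorem are exactly the monic $(t)_{s'}$-hyper-irreducible polynomials $\neq x$ of degree $m$; by Definition~\ref{d:num t-hyper-irred} there are $N_q^*\!\bigl(m,(t)_{s'}\bigr)$ of them, given explicitly by Theorem~\ref{t:number1:1}(a), and this number is nonzero because by Theorem~\ref{t:existence:1} polynomials as in the statement exist.
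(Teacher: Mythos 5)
Your proposal is correct in substance, but it reaches the arithmetic core by a genuinely different route than the paper. The paper never specialises to $Q=q^m$: it proves general order-theoretic lemmas (Lemma~\ref{l:ord:2}, Proposition~\ref{p:ord:3}, and especially Lemma~\ref{l:ord:4}) for arbitrary coprime triples $(a,r,t)$ with $\ord{a}{r}=m$ and $mt/2<\ord{a}{rt}<mt$, obtaining in one stroke that $s=t/\gcd(e/m,t)$ is an odd prime with $s\nmid a^m-1$ and $\ord{a}{r(t)_{s'}}=m(t)_{s'}$; the payoff of that generality is that the same lemma is then applied twice, once with $r=q^m-1$ (to get ``$s$ odd prime'') and once with $r=\vert\omega\vert$ for each root $\omega$ (which gives the forward direction of the counting characterisation essentially for free, via Corollary~\ref{c:existence:3}(a)). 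You instead compute $k=e/m$ explicitly as $\lcm\bigl(\ord{Q}{t_2},B,C\bigr)$ via lifting-the-exponent and run a squeeze $t/2<k<t$, then prove the equivalence $\ord{q}{dt}=e\Leftrightarrow\ord{q}{dt_1}=mt_1$ by comparing $p$-adic valuations; the final identification of the counted set with the monic $(t)_{s'}$-hyper-irreducible polynomials $\neq x$ of degree $m$ is the same reduction the paper makes. The steps you flag as the ``principal obstacle'' do go through: $C\leq (t)_2$ always (so $t_2=1$ forces $k\mid t$, $k\leq t/2$), two distinct odd primes in $t_2$ lose a factor $2$ in the $\lcm$ because both $\ell_i-1$ are even, and the squeeze forces $\ord{Q}{s^a}=(s-1)s^{a-1}$, $\lcm(B,C)=t_1$ and $\gcd\bigl((s-1)s^{a-1},t_1\bigr)=1$, whence $\gcd(k,t)=t/s$; this case analysis is exactly what the paper's Lemmas~\ref{l:ord:2} and~\ref{l:ord:4} encapsulate, so to make your write-up complete you would need to carry out that analysis (in particular the $2$-part bookkeeping) in full, whereas citing the paper's general lemmas avoids redoing it for each root order $d$.
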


Theorems~$\ref{t:existence:1}, \ref{t:number1:1}, \ref{t:number2:1}$ are proved in Section~$\ref{s:proofs}$. 
The existence and the number of some explicit almost $t$-hyper-irreducible polynomials are discussed in Examples~$\ref{e:existence:4}$ and $\ref{e:number2:2}$.

\subsection{Motivation}\label{ss:motivation}

Consider the finite general linear group~$\GL(\s{V})$, the group of all non-singular linear mappings on a finite vector space~$\s{V}$. An element of~$\GL(\s{V})$ is called \emph{fat} if it leaves invariant and acts irreducibly on a subspace of dimension~$e> d/2$. Such elements were first defined by Niemeyer, Praeger and the author in~\cite{MR2890285}. Fat elements generalise the concept of $\ppd$-elements, which are defined by the property of having orders divisible by certain primes called \emph{p}rimitive \emph{p}rime \emph{d}ivisors. In 1997, Guralnick, Penttila, Praeger, and Saxl \cite{MR1658168} classified all subgroups of $\GL(\s{V})$ containing $\ppd$-elements. 
Their work plays an important role in computational group theory for proving results related to the generation of finite simple groups \cite{MR2422303, MR1800754} and designing randomised algorithms for working with groups of matrices over finite fields \cite{MR1625479}. There is also a wide variety of applications in other fields including number theory \cite{MR1343675}, permutation group theory \cite{MR2308171, MR3356280, MR1800730}, and geometry \cite{MR3487142, MR2994631}.

The principal motivation for the work reported in this paper is our wish to carry out an analogous classification of all subgroups of $\GL(\s{V})$ containing fat elements and, moreover, to determine the proportion of fat elements in the relevant groups. Having achieved this goal we then aim to design new randomised algorithms based on fat elements. The purpose is twofold: Firstly, testing for fatness is computationally cheaper than testing whether an element is a ppd-element, and so it is possible that dropping the ppd-property could improve various existing algorithms. Secondly, the results presented in the author's PhD thesis~\cite{mythesis} suggest that fat elements may help to recognise certain matrix groups for which there are no recognition algorithms yet.

As in the case of the $\ppd$-classification we pattern our analysis by Aschbacher's classification \cite{MR746539} of the maximal subgroups of $\GL(\s{V})$ into nine partly overlapping classes~$\mathscr{C}_1, \dots, \mathscr{C}_8$ and~$\mathscr{S}$. 
In her PhD thesis~\cite{mythesis} the author proves that the existence and number of fat elements in a group~$G$ belonging to Aschbacher's classes $\mathscr{C}_2$ or $\mathscr{C}_3$ are linked to the existence, and respectively the number, of almost $t$-hyper-irreducible polynomials. 
The occurrence of fat elements in groups belonging to Aschbacher's classes $\mathscr{C}_2, \mathscr{C}_3$, as well as further results from~\cite{mythesis}, will be covered in separate publications. Here, we only add that the $\mathscr{C}_2$-case and the $\mathscr{C}_3$-case rely (among other things) on Proposition~$\ref{p:charPoly}$ below. It shows that the composition $f(x^t)$ of a monic polynomial~$f \in \mathbb{F}_q[x]$ and $x^t$ arises as the characteristic polynomial of certain $(t \times t)$-block monomial matrices with block length $\deg(f)$.

\begin{proposition}\label{p:charPoly}
	Let $m,t \in \mathbb{N}$ and let $g_1, \dots, g_t$ be invertible $(m \times m)$-matrices over~$\mathbb{F}_q$.
	Let $f \in \mathbb{F}_q[x]$ be the characteristic polynomial of the product $g_1 \cdots g_t$.
	Then $f(x^t)$ is the characteristic polynomial of the matrix
				\[ M =\begin{pmatrix}
				\mathbf{0} & g_1 & \mathbf{0} & \cdots & \mathbf{0} \\ 
				\vdots & \mathbf{0} & g_2 & \ddots & \vdots \\
				\vdots & \vdots & \ddots & \ddots & \mathbf{0} \\
				\mathbf{0} & \vdots &  & \ddots & g_{t-1} \\
				g_t & \mathbf{0} & \cdots & \cdots & \mathbf{0}
		\end{pmatrix},\]
	where $\mathbf{0}$ denotes the $(m \times m)$-zero matrix over~$\mathbb{F}_q$.
\end{proposition}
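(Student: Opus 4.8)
The plan is to compute the characteristic polynomial $\det(xI_{mt}-M)$ by a single Schur‑complement step and watch it collapse to $\det(x^tI_m-H)$, where I write $H:=g_1g_2\cdots g_t$. Since $f$ is the characteristic polynomial of $H$, that last determinant is exactly $f(x^t)$, which is what we want. All the manipulations will be carried out over the rational function field $\mathbb{F}_q(x)$; this is harmless because $\det(xI_{mt}-M)$ lies in $\mathbb{F}_q[x]$, so any identity proved in $\mathbb{F}_q(x)$ which expresses it as a polynomial is automatically an identity in $\mathbb{F}_q[x]$. The case $t=1$ (where $M=g_1$ and the claim is immediate) should be set aside at the outset; assume $t\ge 2$ from now on.

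First I would peel off the first block row and block column of $M$, writing $M=\begin{pmatrix}\mathbf{0}&u\\ v&M'\end{pmatrix}$, where $u=(g_1,\mathbf{0},\dots,\mathbf{0})$ is the $1\times(t-1)$ block row, $v=(\mathbf{0},\dots,\mathbf{0},g_t)^{\mathsf{T}}$ is the $(t-1)\times 1$ block column, and $M'$ is the $(t-1)\times(t-1)$ block matrix having $g_{i+1}$ in block position $(i,i+1)$ for $1\le i\le t-2$ and $\mathbf{0}$ elsewhere. The crucial point is that deleting the first block column has removed the single ``wrap‑around'' block $g_t$ that sat in position $(t,1)$, so $M'$ is block strictly upper triangular, hence nilpotent. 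Therefore $xI_{m(t-1)}-M'$ is a scalar upper triangular matrix with every diagonal entry equal to $x$; in particular it is invertible over $\mathbb{F}_q(x)$ and $\det\!\bigl(xI_{m(t-1)}-M'\bigr)=x^{m(t-1)}$.

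Next I would apply the Schur‑complement determinant identity to $\begin{pmatrix}xI_m&-u\\ -v&xI_{m(t-1)}-M'\end{pmatrix}$ with respect to its invertible lower‑right block, obtaining
\[
  \det(xI_{mt}-M)=x^{m(t-1)}\,\det\!\bigl(xI_m-u\,(xI_{m(t-1)}-M')^{-1}v\bigr).
\]
It then remains to identify $u\,(xI_{m(t-1)}-M')^{-1}v$. Since $xI_{m(t-1)}-M'$ is block upper bidiagonal, the linear system $(xI_{m(t-1)}-M')w=v$ is solved from the bottom block row upward: $w_{t-1}=x^{-1}g_t$ and $w_i=x^{-1}g_{i+1}w_{i+1}$ for $i=t-2,\dots,1$, whence $w_1=x^{-(t-1)}g_2g_3\cdots g_t$ and $u\,w=g_1w_1=x^{-(t-1)}H$. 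Substituting this and factoring the scalar $x^{-(t-1)}$ out of the $m\times m$ determinant gives
\[
  \det(xI_{mt}-M)=x^{m(t-1)}\det\!\bigl(xI_m-x^{-(t-1)}H\bigr)=x^{m(t-1)}\cdot x^{-m(t-1)}\det\!\bigl(x^tI_m-H\bigr)=f(x^t),
\]
as claimed. An alternative, equivalent route is to clear the off‑diagonal blocks $-g_i$ of $xI_{mt}-M$ by block row (or column) operations until the matrix is block lower triangular, and then read the determinant off the diagonal blocks.

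I do not expect a genuine obstacle: the argument is elementary linear algebra. The two points that require care are (i) getting the block indices of $u$, $v$ and $M'$ exactly right — in particular noticing that it is the excised first \emph{column} that carries $g_t$, so that the leftover $M'$ is genuinely strictly upper triangular — and (ii) legitimising the divisions by $x$, handled by working in $\mathbb{F}_q(x)$ as above. It is also worth remarking that invertibility of the $g_i$ is not actually used in this determinant computation; it enters only in guaranteeing that $M$ is a bona fide block monomial matrix in the intended group‑theoretic applications.
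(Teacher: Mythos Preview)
Your proof is correct and takes a genuinely different route from the paper's.

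The paper first conjugates $M$ by the block diagonal matrix $B=\mathrm{diag}(\mathbf{1},g_1,g_1g_2,\dots,g_1\cdots g_{t-1})$, which collapses all the superdiagonal blocks to $\mathbf{1}$ and pushes the product $H=g_1\cdots g_t$ into the bottom-left corner; it then performs explicit block row and column operations on $xI_{mt}-BMB^{-1}$ (adding $x$ times one block row to the next, etc.) until the matrix becomes block triangular, and reads off the determinant by Laplace expansion. Your argument skips the conjugation entirely and instead isolates the first block row and column, applies a single Schur-complement step with respect to the invertible (over $\mathbb{F}_q(x)$) block $xI_{m(t-1)}-M'$, and solves the resulting bidiagonal system by back-substitution. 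The paper's method has the virtue of never leaving the polynomial ring---all manipulations are elementary row/column operations over $\mathbb{F}_q[x]$---but it relies on the invertibility of the $g_i$ to form $B$. Your method is shorter and, as you correctly note, does not use invertibility of the $g_i$ at all; the price is the (harmless) passage to $\mathbb{F}_q(x)$ to justify dividing by $x$. Your closing remark that the alternative of clearing the off-diagonal blocks by row/column operations is ``equivalent'' is essentially the paper's second step, minus the preliminary conjugation.
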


\begin{proof}
	Let $\mathbf{1}$ be the $(m \times m)$-identity matrix over~$\mathbb{F}_q$, let $h$ be the product $g_1 \cdots g_t$, and let $B$ be the $(t \times t)$-block diagonal matrix with diagonal blocks $\mathbf{1}$, $g_1$, $g_1g_2$, \dots, $g_1\cdots g_{t-1}$. Then
	\[B M B^{-1}
		=\begin{pmatrix}
				\mathbf{0} & \mathbf{1} & \mathbf{0} & \cdots & \mathbf{0} \\ 
				\vdots & \mathbf{0} & \mathbf{1} & \ddots & \vdots \\
				\vdots & \vdots & \ddots & \ddots & \mathbf{0} \\
				\mathbf{0} & \vdots &  & \ddots & \mathbf{1} \\
				h & \mathbf{0} & \cdots & \cdots & \mathbf{0}
		\end{pmatrix}.\]
	Hence, the characteristic polynomial of~$M$ is given by the determinant of
		\[ A = \begin{pmatrix}
				x\mathbf{1} & -\mathbf{1} & \mathbf{0} & \cdots & \mathbf{0} \\ 
				\vdots & x\mathbf{1} & -\mathbf{1} & \ddots & \vdots \\
				\vdots & \vdots & \ddots & \ddots & \mathbf{0} \\
				\mathbf{0} & \vdots &  & \ddots & -\mathbf{1} \\
				-h & \mathbf{0} & \cdots & \cdots & x\mathbf{1}
		\end{pmatrix}, \]
	and it remains to verify that $\det(A) = f(x^t)$. We proceed as follows. First, using elementary transformations of rows and columns, we transform~$A$ (in $t$ steps) into a matrix $A_t$ whose determinant is equal to $\det(A)$. Then we verify that $\det(A_t) = f(x^t)$.
	
 	Set $A_0 = A$. For $i \in \{1, \dots, t-1\}$ we transform $A_{i-1}$ into the matrix $A_i$ by multiplying the $i$-th row of blocks of $A_{i-1}$ by $x$ and adding it to the $(i+1)$-th row of blocks of~$A_{i-1}$. Then
 			\[ A_{t-1} = \begin{pmatrix}
				x\mathbf{1} & -\mathbf{1} & \mathbf{0} & \cdots & \mathbf{0} \\ 
				x^2\mathbf{1} & \mathbf{0} & -\mathbf{1} & \ddots & \vdots \\
				\vdots & \vdots & \ddots & \ddots & \mathbf{0} \\
				x^{t-1}\mathbf{1} & \vdots &  & \ddots & -\mathbf{1} \\
				x^t\mathbf{1}-h & \mathbf{0} & \cdots & \cdots & \mathbf{0}
		\end{pmatrix}. \]
	Let $A_t$ be the matrix obtained from $A_{t-1}$, for each $i \in \{1, \dots, t-1\}$, by multiplying the $(i+1)$-th column of blocks of $A_{t-1}$ by $x^i$ and adding it to the first column of blocks of $A_{t-1}$. Then 
 			\[ A_t = \begin{pmatrix}
				\mathbf{0} & -\mathbf{1} & \mathbf{0} & \cdots & \mathbf{0} \\ 
				\mathbf{0} & \mathbf{0} & -\mathbf{1} & \ddots & \vdots \\
				\vdots & \vdots & \ddots & \ddots & \mathbf{0} \\
				\mathbf{0} & \vdots &  & \ddots & -\mathbf{1} \\
				x^t\mathbf{1}-h & \mathbf{0} & \cdots & \cdots & \mathbf{0}
		\end{pmatrix}. \]	
	After repeatedly applying Laplace expansion along the respective first row (for $m(t-1)$ times), we obtain $\det(A_t) =  (-1)^{(m+3)m(t-1)}\det(x^t\mathbf{1}-h) = \det(x^t\mathbf{1}-h) = f(x^t)$, as asserted.
\end{proof}

\section{Preliminaries}

The proofs of Theorems~$\ref{t:existence:1}, \ref{t:number1:1}, \ref{t:number2:1}$ rely mainly on elementary number theory and some facts about roots of polynomials over finite fields. We discuss all preliminary results in this section. For a prime~$s$, recall the notions $(t)_s, (t)_{s'}$ for the $s$-part, and respectively the $s'$-part, of a positive integer~$t$.

\subsection{The order of an integer modulo \emph{r}}

Given $r \in \mathbb{N}$, consider the ring $\mathbb{Z}/r\mathbb{Z}$ of integers modulo~$r$ and its group of units $(\mathbb{Z}/r\mathbb{Z})^*$. Elements of $(\mathbb{Z}/r\mathbb{Z})^*$ are of the form $a+r\mathbb{Z}$, where $a$ is a positive integer coprime to $r$. In particular, $a \neq r$ unless $r = 1$, in which case $\mathbb{Z}/1\mathbb{Z}$ is the zero ring and $(\mathbb{Z}/1\mathbb{Z})^*$ is the trivial group. We write $\ord{a}{r} = m$ to denote that the element $a +r\mathbb{Z} \in (\mathbb{Z}/r\mathbb{Z})^*$ has order~$m$. Equivalently, $m$ is the smallest positive integer such that $a^m-1$ is divisible by~$r$. In fact, we have $\ord{a}{r} = m$ if and only if $r$ divides $a^m-1$ but $r$ does not divide $a^i-1$ for any proper divisor~$i$ of~$m$.

Recall that we use the letter $\varphi$ to denote Euler's totient function, that is $\varphi: \mathbb{N} \rightarrow \mathbb{N}$, $r \mapsto \vert (\mathbb{Z}/r\mathbb{Z})^* \vert$. Further, we let $\lambda: \mathbb{N} \rightarrow \mathbb{N}$, $r \mapsto \exp( (\mathbb{Z}/r\mathbb{Z})^*)$ be the Carmichael function assigning to each positive integer $r$ the exponent of the group $(\mathbb{Z}/r\mathbb{Z})^*$, that is the least common multiple of the orders of all elements in $(\mathbb{Z}/r\mathbb{Z})^*$. (The Carmichael function was first introduced by Carmichael \cite{MR1558896} in 1910.) If $\gcd(a,r) = 1$ then, by definition,
	\begin{align}
		 \ord{a}{r} \mid \lambda(r) \mid \varphi(r). \label{eq:ord|lambda|phi}
	\end{align}
Several other basic properties of $\ord{a}{r}$ are listed in our first lemma below.

\begin{lemma}\label{l:ord:1}
	Let $a, r \in \mathbb{N}$ be coprime.
	\begin{enumerate}[$(a)$]
		\item Let $k \in \mathbb{N}$. Then $r \mid a^k-1$ if and only if $\ord{a}{r} \mid k$.
		\item If $r'\mid r$, then $\ord{a}{r'} \mid \ord{a}{r}$.
		\item Let $s \in \mathbb{N}$ be coprime to $ar$. Then $\ord{a}{rs} = \lcm \{ \ord{a}{r}, \ord{a}{s}\}$.
		\item Suppose that $r$ is prime and let $k  \geq 2$ be an integer. Then 
 			\[\ord{a}{r^k} = \ord{a}{r^{k-1}}r^j, \quad \text{for some } j \in \{0,1\}.\]
 		\item If $r$ is a prime and $k \in \mathbb{N}$, then $\ord{a}{r^k} \mid (r-1)r^{k-1}$.\\
 		Further, $\ord{a}{2^k} \mid 2^{k-2}$ for~$k \geq 3$.
		\item Let $t \in \mathbb{N}$ be coprime to $a$. Then $\ord{a}{rt} \leq \ord{a}{r} t$.
	\end{enumerate}
\end{lemma}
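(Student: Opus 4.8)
The plan is to dispatch parts~$(a)$--$(e)$ with standard cyclic-group arguments and to concentrate on~$(f)$, which is the only part that is an inequality rather than a divisibility. For~$(a)$ I would just invoke that in any group the exponents $k$ with $g^{k}=1$ are precisely the multiples of the order of $g$, applied to $g=a+r\mathbb{Z}\in(\mathbb{Z}/r\mathbb{Z})^*$. Part~$(b)$ follows by reducing $r\mid a^{\ord{a}{r}}-1$ modulo $r'$ and applying~$(a)$. For~$(c)$, coprimality of $r,s$ gives $rs\mid a^{k}-1$ iff $r\mid a^{k}-1$ and $s\mid a^{k}-1$, which by~$(a)$ is $\lcm\{\ord{a}{r},\ord{a}{s}\}\mid k$; the smallest positive such $k$ is $\lcm\{\ord{a}{r},\ord{a}{s}\}$, which by definition equals $\ord{a}{rs}$. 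For~$(d)$ set $d=\ord{a}{r^{k-1}}$ and factor $a^{dr}-1=(a^{d}-1)\sum_{i=0}^{r-1}a^{di}$: the first factor is divisible by $r^{k-1}$ and the second, being a sum of $r$ terms each $\equiv 1\mod{r}$, is divisible by $r$, so $r^{k}\mid a^{dr}-1$ and hence $\ord{a}{r^{k}}\mid dr$ by~$(a)$; since also $d\mid\ord{a}{r^{k}}$ by~$(b)$, the quotient $\ord{a}{r^{k}}/d$ is an integer dividing $r$, hence $1$ or $r$ as $r$ is prime. Part~$(e)$ then follows by induction on $k$ from~$(d)$, with base case $\ord{a}{r}\mid\varphi(r)=r-1$ coming from~$\eqref{eq:ord|lambda|phi}$; the assertion for $2^{k}$ is $\eqref{eq:ord|lambda|phi}$ together with the standard value $\lambda(2^{k})=2^{k-2}$ for $k\ge 3$.

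For~$(f)$ the idea is a pigeonhole count. I would set $d=\ord{a}{r}$ and $b=a^{d}$, noting $\gcd(b,rt)=\gcd(a,rt)=1$ (as $a$ is coprime to both $r$ and $t$) and $b\equiv 1\mod{r}$. Then every power $b^{j}$ reduces mod $rt$ into the set of residues that are $\equiv 1\mod{r}$, of which there are exactly $rt/r=t$. Among the $t+1$ powers $b^{0},b^{1},\dots,b^{t}$ two must therefore agree mod $rt$, say $b^{i}\equiv b^{j}\mod{rt}$ with $0\le i<j\le t$; cancelling the unit $b^{i}$ yields $b^{k}\equiv 1\mod{rt}$ for $k:=j-i\in\{1,\dots,t\}$. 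Hence $rt\mid b^{k}-1=a^{dk}-1$, so~$(a)$ gives $\ord{a}{rt}\mid dk$ and therefore
\[ \ord{a}{rt}\le dk\le dt=\ord{a}{r}\,t .\]

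The one thing to be careful about — and the reason I would use pigeonhole rather than something more direct — is that the stronger divisibility $\ord{a}{rt}\mid\ord{a}{r}\,t$ is \emph{false} in general, e.g.\ for $(a,r,t)=(7,3,5)$ one has $\ord{7}{3}=1$ but $\ord{7}{15}=4\nmid 5$. So one cannot simply exhibit $\ord{a}{r}\,t$ as a multiple of $\ord{a}{rt}$; the pigeonhole step succeeds precisely because it only produces \emph{some} exponent $k\le t$ with $rt\mid b^{k}-1$, which is all the inequality requires. (If one prefers a group-theoretic phrasing of the counting step: the image of $b$ in $(\mathbb{Z}/rt\mathbb{Z})^*$ lies in the kernel of reduction to $(\mathbb{Z}/r\mathbb{Z})^*$, a subgroup of order $\varphi(rt)/\varphi(r)$, which one checks is at most $t$.) Beyond this point I expect no real obstacle, since the whole lemma is soft number theory.
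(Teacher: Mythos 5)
Your parts $(a)$--$(e)$ are correct and essentially follow the paper's own arguments: $(a)$ is the standard statement about orders (the paper spells it out via the division algorithm), $(b)$ and $(d)$ are the same computations, $(c)$ replaces the paper's appeal to $(\mathbb{Z}/rs\mathbb{Z})^* \cong (\mathbb{Z}/r\mathbb{Z})^* \times (\mathbb{Z}/s\mathbb{Z})^*$ by the equivalent characterisation through $(a)$, and in $(e)$ you obtain the odd-prime-power bound by induction from $(d)$ instead of quoting Carmichael's formula for $\lambda$, which you (like the paper) still invoke for $\lambda(2^k)=2^{k-2}$. The genuine divergence is in $(f)$. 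The paper writes $t'$ for the largest divisor of $t$ coprime to $r$, uses $(c)$ to get $\ord{a}{rt} \leq \ord{a}{rt/t'}\,\ord{a}{t'} \leq \ord{a}{rt/t'}\,t'$, and then proves the divisibility $\ord{a}{rt/t'} \mid \ord{a}{r}\,t/t'$ prime-by-prime via $(c)$ and repeated use of $(d)$. Your pigeonhole argument --- the powers of $b=a^{\ord{a}{r}}$ stay among the $t$ residues $\equiv 1 \mod{r}$, so $b^k \equiv 1 \mod{rt}$ for some $1\leq k\leq t$, whence $\ord{a}{rt} \mid k\,\ord{a}{r} \leq t\,\ord{a}{r}$ by $(a)$ --- is correct, shorter, and needs nothing beyond part $(a)$, avoiding the prime decomposition entirely. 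Your caveat is also accurate: the divisibility $\ord{a}{rt} \mid \ord{a}{r}\,t$ is false in general (your example $\ord{7}{3}=1$, $\ord{7}{15}=4 \nmid 5$ checks out), and the paper's proof is consistent with this, since it only gets divisibility for the part of $t$ whose primes divide $r$ and settles for the crude bound $\ord{a}{t'} \leq t'$ on the rest. What the paper's route shows in passing is that finer divisibility information for the $r$-adjacent part of $t$ (though only the inequality is recorded in the lemma and used later); what yours buys is a single uniform counting step.
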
%

\begin{proof}
	\begin{enumerate}[$(a)$]
	\item Let $m = \ord{a}{r}$ and let $\ell, s$ be non-negative integers such that $s < m$ and $k = \ell m + s$.
		Since $a^m \equiv 1\mod{r}$ we have $a^{\ell m} \equiv 1\mod{r}$. Then $a^k \equiv a^s\mod{r}$. Thus, $r \mid a^k-1$ if and only if $a^s \equiv 1\mod{r}$, which (recalling that $s < m$) is the case if and only if $s = 0$, that is (recalling that $k = \ell m + s$) if and only if $m \mid k$.
		
	\item Let $m = \ord{a}{r}$. Since $r \mid a^m-1$, any divisor $r'$ of $r$ also divides $a^m-1$. Then part~$(a)$ of the current lemma 
	yields $\ord{a}{r'} \mid  m$.
		
	\item Since $(\mathbb{Z}/rs\mathbb{Z})^* \cong (\mathbb{Z}/r\mathbb{Z})^* \times (\mathbb{Z}/s\mathbb{Z})^*$, any element $a + rs\mathbb{Z} \in (\mathbb{Z}/rs\mathbb{Z})^*$ has order equal to the least common multiple of the orders of $a + r\mathbb{Z} \in (\mathbb{Z}/r\mathbb{Z})^*$ and $a + s\mathbb{Z} \in (\mathbb{Z}/s\mathbb{Z})^*$.
		
	\item Let $m = \ord{a}{r^{k-1}}$. By part~$(b)$ of the current lemma, $m \mid \ord{a}{r^k}$. It remains to show that $\ord{a}{r^k} \mid mr$. By the definition of~$m$ we have $r^{k-1} \mid a^m-1$, and in particular, $a^m \equiv 1\mod{r}$. Thus, $r$ divides $a^{m(r-1)}+ \dots + a^m +1 = (a^{mr}-1)/(a^m-1)$, and equivalently $r(a^m-1) \mid a^{mr}-1$. Recalling that $r^{k-1} \mid a^m-1$, we get $r^k \mid a^{mr}-1$. Then $\ord{a}{r^k} \mid mr$ by part~$(a)$ of the current lemma.
	
	\item The assertion follows directly from~$\eqref{eq:ord|lambda|phi}$ and the formula to calculate $\lambda(r)$ given in \cite[p.~$232$]{MR1558896} (see \cite[p.~$29$]{MR2028675} for a more recent reference).
	
	\item Let $t'$ be the largest divisor of~$t$ which is coprime to~$r$. We then have $\gcd(rt/t', t')= 1$. Thus, by part~$(c)$ of the current lemma, $\ord{a}{rt}$ equals the least common multiple of, and thus divides the product of, $\ord{a}{rt/t'}$ and $\ord{a}{t'}$. Using $\ord{a}{t'} \leq t'$ we obtain
		\[ \ord{a}{rt} \leq \ord{a}{\frac{rt}{t'}} t'. \]%
	We prove the assertion by showing that $\ord{a}{rt/t'} \leq \ord{a}{r}t/t'$. To this end, let $s_1, \dots, s_\ell$ be the distinct prime divisors of~$r$. Recall (from the definition of~$t'$) that each prime factor of $t/t'$ divides~$r$. Hence, $s_1, \dots, s_\ell$ are also (all of) the distinct prime divisors of~$rt/t'$. According to part~$(c)$ of this lemma we have
		\[\ord{a}{\frac{rt}{t'}} = \lcm\left\{ \ord{a}{\Bigl(\frac{rt}{t'}\Bigr)_{s_1}}, \dots, \ord{a}{\Bigl(\frac{rt}{t'}\Bigr)_{s_\ell} } \right\}.\]
	For $i \in \{1, \dots, \ell\}$, repeatedly applying part~$(d)$ of the current lemma shows that $\ord{a}{ (rt/t')_{s_i} }$ divides $ \ord{a}{(r)_{s_i} } (t/t')_{s_i}$. Hence,
		\[  \ord{a}{\frac{rt}{t'}} \mid \lcm\bigl\{ \ord{a}{ (r)_{s_1} }, \dots, \ord{a}{ (r)_{s_\ell}} \bigr\} \frac{t}{t'}. \]%
	Using part~$(c)$ of the current lemma it follows that $\ord{ a}{rt/t'}$ divides, and thus is less than or equal to, $\ord{a}{r} t/t'$. \qedhere
	\end{enumerate}
\end{proof}%

If $a,r,t \in \mathbb{N}$ are such that $\gcd(a,rt) =1$ then by Lemma~$\ref{l:ord:1}(f)$ we have $\ord{a}{rt} \leq \ord{a}{r}t$. We are particularly interested in the situation where $\ord{a}{rt}$ is large in the sense that $\ord{a}{rt} > \ord{a}{r}t/2$. The case $a = 1$ is trivial. (If $a = 1$ then $\ord{1}{rt} > \ord{1}{r}t/2$ precisely when~$t = 1$.)
So, in what follows, we assume that $a \geq 2$.

\begin{lemma}\label{l:ord:2}
	Let $a,r,t \in \mathbb{N}$ be such that $a \geq 2$ and $\gcd(a,rt) = 1$. Let $m = \ord{a}{r}$ and assume that $\ord{a}{rt} >mt/2$. Then the following hold.
		\begin{enumerate}[$(a)$]
			\item We have $\gcd(t, (a^m-1)/r) = 1$ and $\gcd(4,t) \mid r$.
			\item If $t$ has a prime divisor~$s$ which does not divide $r$ then $s$ is uniquely determined and $s \neq 2$.
		\end{enumerate}
\end{lemma}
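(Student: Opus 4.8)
The plan is to show that the hypothesis $\ord{a}{rt}>mt/2$ forces Lemma~\ref{l:ord:1}(f) to be \emph{almost} an equality, and then to read off (a) and (b) one prime at a time; throughout, $v_p(n)$ denotes the $p$-adic valuation of~$n$ (so $(n)_p=p^{v_p(n)}$). As a first step, let $t'$ be the largest divisor of~$t$ coprime to~$r$ (exactly as in the proof of Lemma~\ref{l:ord:1}(f)) and put $u=t/t'$, so every prime of~$u$ divides~$r$; then $\gcd(ru,t')=1$, and since $\gcd(a,rt)=1$ Lemma~\ref{l:ord:1}(c) gives $\ord{a}{rt}=\lcm\{\ord{a}{ru},\ord{a}{t'}\}$. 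The argument in the proof of Lemma~\ref{l:ord:1}(f) shows that $\ord{a}{ru}$ divides $\ord{a}{r}u=mu$, and always $\ord{a}{t'}\mid\lambda(t')\mid\varphi(t')$. Writing $\ord{a}{ru}=mu/\delta_1$ and $\ord{a}{t'}=\varphi(t')/\delta_2$ with $\delta_1,\delta_2\in\mathbb{N}$, and letting $g=\gcd(\ord{a}{ru},\ord{a}{t'})$, we get $\ord{a}{rt}=mu\varphi(t')/(\delta_1\delta_2 g)$, and $\ord{a}{rt}>mut'/2$ forces $\delta_1\delta_2 g<2\varphi(t')/t'\le 2$, hence $\delta_1=\delta_2=g=1$ and $\varphi(t')>t'/2$. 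So the argument may use
\[ \ord{a}{ru}=mu,\qquad \ord{a}{t'}=\varphi(t')=\lambda(t'),\qquad \gcd(mu,\varphi(t'))=1,\qquad \varphi(t')>t'/2. \]

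From $\varphi(t')>t'/2$ we get that $t'$ is odd. If $t'$ had two distinct (hence odd) prime factors $p\neq p'$, then $\varphi(p^{v_p(t')})$ and $\varphi((p')^{v_{p'}(t')})$ would both be even, so their product (which is $\varphi(t')$) would strictly exceed their least common multiple (which is $\lambda(t')$), contradicting the display. Hence $t'=1$ or $t'=q^k$ for an odd prime~$q$. Since the primes dividing~$t$ but not~$r$ are exactly the primes of~$t'$, this proves part~(b). It also gives half of part~(a): if $2\mid t$ then $2\mid u$ (as $t'$ is odd), so $2\mid r$; and if $s\mid t$ is a prime with $s\nmid r$, then $t'=s^k$, and $\ord{a}{t'}=\varphi(t')$ makes $a$ a primitive root modulo~$s$, so $\ord{a}{s}=s-1\ge 2$, while $\gcd(mu,\varphi(t'))=1$ forces $\gcd(m,s-1)=1$; thus $s-1\nmid m$, i.e.\ $s\nmid a^m-1$, i.e.\ $v_s(a^m-1)=0=v_s(r)$.

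It remains to verify, for each prime $p\mid\gcd(r,t)$ (equivalently, each prime of~$u$), that $v_p(a^m-1)=v_p(r)$, and that $4\mid r$ whenever $4\mid t$. The key tool is a \emph{full lifting} consequence of $\ord{a}{ru}=mu$: for every prime $p\mid\gcd(r,t)$, writing $\alpha=v_p(r)$ and $\beta=v_p(t)\ge 1$, one has $\ord{a}{p^{\alpha+\beta}}=\ord{a}{p^{\alpha}}\,p^{\beta}$ and $v_p(\ord{a}{p^{\alpha}})=v_p(m)$. This follows by comparing $v_p$ on the two sides of $\ord{a}{ru}=mu$, each decomposed by Lemma~\ref{l:ord:1}(c): the $p$-part of $mu$ is $v_p(m)+\beta$, while the $p$-part of $\ord{a}{ru}$ is either $v_p(\ord{a}{p^{\alpha}})+c$ with $0\le c\le\beta$ (iterate Lemma~\ref{l:ord:1}(d)) or comes from a prime $\neq p$ and is then $\le v_p(m)$; since $\beta\ge 1$ only the former can meet $v_p(m)+\beta$, and as $\ord{a}{p^{\alpha}}\mid m$ we must have $v_p(\ord{a}{p^{\alpha}})=v_p(m)$ and $c=\beta$. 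Granting this: for odd~$p$, let $d=\ord{a}{p}$ (so $p\nmid d$ by Lemma~\ref{l:ord:1}(e)) and $f=v_p(a^d-1)\ge 1$; Lemma~\ref{l:ord:1}(d),(e) give $\ord{a}{p^j}=d$ for $j\le f$ and $\ord{a}{p^j}=d\,p^{j-f}$ for $j\ge f$, so the full-lifting identity forces $\alpha\ge f$, hence $v_p(m)=v_p(\ord{a}{p^{\alpha}})=\alpha-f$, and therefore (using $d\mid m$, since $p\mid r\mid a^m-1$) $p^j\mid a^m-1$ exactly for $j\le\alpha$, i.e.\ $v_p(a^m-1)=\alpha=v_p(r)$. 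For $p=2$ (so $2\mid t$ and $2\mid r$): full lifting forces the order to grow at the first step, $\ord{a}{2^{\alpha+1}}=2\,\ord{a}{2^{\alpha}}$, and since $\ord{a}{2^{\alpha}}$ is a power of~$2$, equal to $2^{v_2(m)}$ by the identity, $\ord{a}{2^{\alpha+1}}=2^{v_2(m)+1}\nmid m$, whence $v_2(a^m-1)=\alpha=v_2(r)$; and if $4\mid t$ then $\beta\ge 2$ and $\ord{a}{2^{\alpha+\beta}}=\ord{a}{2^{\alpha}}\,2^{\beta}$, which would equal $2^{\beta}$ if $\alpha=1$, contradicting $\ord{a}{2^{\alpha+\beta}}\mid 2^{\alpha+\beta-2}=2^{\beta-1}$ from Lemma~\ref{l:ord:1}(e) (note $\alpha+\beta\ge 3$), so $4\mid r$. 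Together with the previous paragraph this gives $v_p(a^m-1)=v_p(r)$ for every prime $p\mid t$, i.e.\ $\gcd(t,(a^m-1)/r)=1$, and $\gcd(4,t)\mid r$, completing~(a).

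I expect the main obstacle to be the prime $p=2$: the group $(\mathbb{Z}/2^k\mathbb{Z})^{*}$ is not cyclic and $2$-adic lifting is irregular, so the clean odd-prime order formulas do not apply directly. The \emph{full lifting} identity is precisely the device that lets the prime~$2$ and the odd primes be handled in parallel; both verifying it and drawing the $p=2$ conclusions hinge on the special bound $\ord{a}{2^k}\mid 2^{k-2}$ for $k\ge 3$ from Lemma~\ref{l:ord:1}(e).
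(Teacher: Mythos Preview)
Your proof is correct and reaches the goal by a genuinely different route from the paper. The paper handles each of the three claims by a short contradiction: for $\gcd(t,(a^m-1)/r)=1$ it assumes a prime $s$ divides both, obtains $\ord{a}{rs}=m$, and then Lemma~\ref{l:ord:1}(f) gives $\ord{a}{rt}\le mt/s\le mt/2$; for $\gcd(4,t)\mid r$ it splits off the $2$-part via Lemma~\ref{l:ord:1}(c) and bounds it with Lemma~\ref{l:ord:1}(e); for~(b) it assumes two distinct such primes $s,\ell$, both odd by~(a), and computes $\ord{a}{rt}\le \lcm\{s-1,\ell-1\}\cdot mt/(s\ell)<mt/2$. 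Your argument instead first tightens Lemma~\ref{l:ord:1}(f) to a system of \emph{equalities} ($\ord{a}{ru}=mu$, $\ord{a}{t'}=\varphi(t')=\lambda(t')$, $\gcd(mu,\varphi(t'))=1$, $\varphi(t')>t'/2$) and then reads off everything prime by prime, including the sharper information $v_p(a^m-1)=v_p(r)$ for every $p\mid t$. This is more work but essentially yields the forward direction of Proposition~\ref{p:ord:3} simultaneously, so the two approaches trade brevity for structural content.

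One small justification issue: for odd $p$ you assert that Lemma~\ref{l:ord:1}(d),(e) give the exact formula $\ord{a}{p^j}=d\,p^{j-f}$ for $j\ge f$. Those two parts only give $\ord{a}{p^j}\mid d\,p^{j-f}$; the equality requires the lifting-the-exponent fact that once $\ord{a}{p^{k+1}}=p\,\ord{a}{p^k}$ for odd $p$, the order keeps growing by $p$ at every subsequent step. This is standard and true, but it is not a consequence of Lemma~\ref{l:ord:1}(d),(e) alone. In fact you do not need the exact formula at all: your own argument for $p=2$ works verbatim for odd $p$. Full lifting forces $\ord{a}{p^{\alpha+1}}=p\,\ord{a}{p^{\alpha}}$, hence $v_p(\ord{a}{p^{\alpha+1}})=v_p(m)+1$, so $\ord{a}{p^{\alpha+1}}\nmid m$ and $p^{\alpha+1}\nmid a^m-1$; combined with $p^{\alpha}\mid r\mid a^m-1$ this gives $v_p(a^m-1)=\alpha=v_p(r)$ directly. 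Replacing the odd-$p$ paragraph with this observation closes the gap and shortens the proof.
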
%

\begin{proof}
	\begin{enumerate}[$(a)$] 
		\item Seeking a contradiction, suppose that there is a prime divisor~$s$ of~$t$ which divides $(a^m-1)/r$. Then $rs \mid a^m-1$. Hence, by Lemma~$\ref{l:ord:1}(a)$ we get $\ord{a}{rs} \mid m$. Since $\ord{a}{r} = m$, Lemma~$\ref{l:ord:1}(b)$ reveals that $m \mid \ord{a}{rs}$. Thus, $\ord{a}{rs} = m$.
		Then Lemma~$\ref{l:ord:1}(f)$ yields the contradiction $\ord{a}{rt} \leq \ord{a}{rs}t/s \leq mt/2$.
 
 		Next, again seeking a contradiction, suppose that $\gcd(4,t) \nmid r$. According to Lemma~$\ref{l:ord:1}(f)$ we have $\ord{a}{rt} \leq \ord{a}{r(t)_2} \, (t)_{2'}$ which by Lemma~$\ref{l:ord:1}(c)$ is equivalent to
 			\begin{align}
 				\ord{a}{rt} \leq \lcm\biggl\{ \underbrace{\ord{a}{ \frac{r}{\gcd(2,r)}}}_{=:\alpha }, \underbrace{\ord{a}{\gcd(2,r)(t)_2}}_{=:\beta}\biggr\}\, (t)_{2'}.\label{eq:ord:2:1}
 			\end{align}
 		Since $\gcd(4,t)$ does not divide $r$, we get $2 \mid \gcd(2,r)(t)_2$ if $\gcd(2,r) = 1$, and $8 \mid\gcd(2,r)(t)_2$ if $\gcd(2,r) =2$. By Lemma~$\ref{l:ord:1}(e)$, it follows that 
 			\begin{align}
 				\beta \mid \frac{(t)_2}{2}.\label{eq:ord:2:2}
 			\end{align}
 		Now, by Lemma~$\ref{l:ord:1}(b)$, $\alpha$ divides $m$. Combining this fact with $\eqref{eq:ord:2:1}$ and $\eqref{eq:ord:2:2}$ yields the contradiction $\ord{a}{rt} \leq \lcm\{ m, (t)_2/2\} \, (t)_{2'} \leq mt/2$.
 		
 		\item Suppose that $t$ has two prime divisors~$s$ and $\ell$ which do not divide~$r$. By part~$(a)$ of the current lemma we have $2 \nmid s\ell$.
 		Seeking a contradiction, assume that $s \neq \ell$. Then by Lemma~$\ref{l:ord:1}(c)(f)$ we get 
 			\begin{align}
 				\ord{a}{rt} \leq \lcm\bigl\{\ord{a}{r},\, \ord{a}{(t)_s}, \,\ord{a}{(t)_\ell } \bigr\} \frac{ t}{ (t)_s (t)_\ell }.\label{eq:ord:2:3}
 			\end{align}
 		Now, by Lemma~$\ref{l:ord:1}(e)$, $\ord{a}{(t)_s}$ divides $(s-1)(t)_s/s$, and $\ord{a}{(t)_\ell}$ is a divisor of $(\ell-1)(t)_\ell/\ell$. Moreover, $\ord{a}{r} = m$ by definition. Hence, $\eqref{eq:ord:2:3}$ yields
 			\[\ord{a}{rt} \leq \lcm\left\{m, (s-1)\frac{ (t)_s}{s}, (\ell-1) \frac{ (t)_\ell}{\ell}  \right\} \frac{ t}{ (t)_s (t)_\ell},\]
 		and thus
 			\begin{align}
 				\ord{a}{rt} \leq \lcm\{ s-1, \ell-1 \} \frac{m t}{ s \ell}.\label{eq:ord:2:4}
 			\end{align}
 		Recall that $s$ and $\ell$ are odd. Then $\lcm\{ s-1, \ell-1\}  < s \ell/2$, whence $\ord{a}{rt} < mt/2$ by $\eqref{eq:ord:2:4}$. As this is not true, we conclude that $s = \ell$. \qedhere
	\end{enumerate}%
\end{proof}%

We next classify all triples~$(a,r,t) \in \mathbb{N}^3$ satisfying $a \geq 2$, $\gcd(a,rt) = 1$ and $\ord{a}{rt} = \ord{a}{r}t$. The implication ``$(b) \Rightarrow (a)$'' is essentially proved in~\cite[Theorem~$3.34$]{MR1429394}.%

\begin{proposition}\label{p:ord:3}
	Let $a,r,t \in \mathbb{N}$ be such that $a \geq 2$ and $\gcd(a,r) = 1$. Let $m = \ord{a}{r}$. The following are equivalent.
	\begin{enumerate}[$(a)$]
		\item We have 	$\gcd(a,rt) = 1$ and $\ord{a}{rt} = mt$.
		\item Every prime divisor of~$t$ divides $r$ but not $(a^m-1)/r$, and $\gcd(4,t) \mid r$.
	\end{enumerate}\vskip-.3cm
\end{proposition}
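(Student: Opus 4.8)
The plan is to prove the two implications separately. The implication $(b)\Rightarrow(a)$ is essentially \cite[Theorem~$3.34$]{MR1429394}, so the new content is the converse $(a)\Rightarrow(b)$; there I intend to read off two of the conditions in~(b) directly from Lemma~\ref{l:ord:2}(a) and then argue separately that every prime divisor of~$t$ must divide~$r$.

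For $(a)\Rightarrow(b)$ I would assume $\gcd(a,rt)=1$ and $\ord{a}{rt}=mt$ with $m=\ord{a}{r}$. In particular $\ord{a}{rt}>mt/2$, so (since $a\ge 2$) Lemma~\ref{l:ord:2}(a) applies and yields both $\gcd\bigl(t,(a^m-1)/r\bigr)=1$ (equivalently, no prime divisor of~$t$ divides $(a^m-1)/r$) and $\gcd(4,t)\mid r$; these are two of the three assertions of~(b). For the third, suppose for contradiction that a prime~$s$ divides~$t$ but not~$r$, and write $t=(t)_s(t)_{s'}$. Since $s\nmid r(t)_{s'}$ and $\gcd(a,t)=1$, Lemma~\ref{l:ord:1}(c) gives $\ord{a}{rt}=\lcm\bigl\{\ord{a}{r(t)_{s'}},\ord{a}{(t)_s}\bigr\}$, which is at most the product of the two terms. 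By Lemma~\ref{l:ord:1}(f) the first term is at most $m(t)_{s'}$, and by Lemma~\ref{l:ord:1}(e) the second term divides $(s-1)(t)_s/s$, so $\ord{a}{rt}\le m(t)_{s'}\cdot(s-1)(t)_s/s=mt(s-1)/s<mt$, contradicting $\ord{a}{rt}=mt$. Hence every prime divisor of~$t$ divides~$r$, and~(b) holds.

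For $(b)\Rightarrow(a)$ I would first observe that $\gcd(a,rt)=1$: since $\gcd(a,r)=1$ and every prime of~$t$ divides~$r$, every prime of~$t$ is coprime to~$a$, so $\gcd(a,t)=1$. The equality $\ord{a}{rt}=mt$ is then \cite[Theorem~$3.34$]{MR1429394}, and for completeness I would indicate the mechanism. Writing $r=\prod_j p_j^{b_j}$ and $t=\prod_j p_j^{c_j}$ with $c_j>0$ only when $p_j\mid r$, and (using $\gcd(4,t)\mid r$) with $b_j\ge 2$ whenever $p_j=2$ and $c_j>0$, one applies Lemma~\ref{l:ord:1}(c) repeatedly to reduce to prime powers: $\ord{a}{rt}=\lcm_j\ord{a}{p_j^{b_j+c_j}}$ and $m=\lcm_j\ord{a}{p_j^{b_j}}$. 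The local input is that for each~$j$ one has $\ord{a}{p_j^{b_j+c_j}}=\ord{a}{p_j^{b_j}}\,p_j^{c_j}$ and $v_{p_j}\bigl(\ord{a}{p_j^{b_j}}\bigr)=v_{p_j}(m)$; these both follow from the standard ``lifting the exponent'' description of $v_{p}(a^k-1)$, the point being that the hypothesis $p_j\nmid(a^m-1)/r$ (together with $4\mid r$ when $p_j=2$) places $b_j=v_{p_j}(r)$ in the range where each step of Lemma~\ref{l:ord:1}(d) multiplies the order by the full factor~$p_j$. Comparing $p$-adic valuations then assembles the local identities into $\ord{a}{rt}=\bigl(\lcm_j\ord{a}{p_j^{b_j}}\bigr)\prod_j p_j^{c_j}=mt$.

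I expect the delicate step to be exactly this local analysis in $(b)\Rightarrow(a)$, in particular the prime $p=2$, where lifting the exponent takes its exceptional form and the hypothesis $4\mid r$ is genuinely needed to rule out the irregular behaviour of $\ord{a}{2^k}$ for small~$k$; the identity $v_{p_j}(\ord{a}{p_j^{b_j}})=v_{p_j}(m)$ is also the precise fact guaranteeing that the $p_j$-adic valuation of the $\lcm$ defining $\ord{a}{rt}$ is attained at the ``right'' prime. Should one instead simply quote \cite[Theorem~$3.34$]{MR1429394}, the direction $(b)\Rightarrow(a)$ collapses to the one-line coprimality remark and the whole proof rests on the short computation in $(a)\Rightarrow(b)$.
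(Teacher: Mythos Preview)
Your proposal is correct and follows the paper's proof essentially verbatim: for $(a)\Rightarrow(b)$ the paper invokes Lemma~\ref{l:ord:2}(a) and then derives the same contradiction via Lemma~\ref{l:ord:1}(c)(e)(f), while for $(b)\Rightarrow(a)$ it simply cites \cite[Theorem~$3.34$]{MR1429394} after the coprimality observation, without sketching the mechanism. One small slip in your optional sketch: $\gcd(4,t)\mid r$ only forces $b_j\ge 2$ when $p_j=2$ and $c_j\ge 2$ (not merely $c_j>0$), though as you yourself note the citation renders this immaterial.
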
%

\begin{proof}
	If $t =1$ then there is nothing to show. (In this case condition~$(a)$ simplifies to $\gcd(a,r) = 1$, $\ord{a}{r} = m$, and both equations hold by assumption, while condition~$(b)$ is trivially true.) If $r = 1$ then $m = 1$ and condition~$(a)$ simplifies to $\gcd(a,t) = 1$, $\ord{a}{t} = t$, which is true if and only if $t = 1$, as asserted. We may thus assume that $r,t \geq 2$.
		
	First, suppose that condition~$(b)$ holds. Because each prime divisor of~$t$ divides~$r$, recalling that $\gcd(a,r) = 1$, we see that $\gcd(a,rt) = 1$. Further, by \cite[Theorem~$3.34$]{MR1429394} we have $\ord{a}{rt} = mt$.
	(In order to see that we may indeed apply \cite[Theorem~$3.34$]{MR1429394}, observe that $4 \mid t$ implies $4 \mid r \mid a^m-1$. Hence, $t \equiv 0\mod{4}$ implies that $a^m \equiv 1 \mod{4}$.)
	
	Conversely, suppose that condition~$(a)$ holds. From Lemma~$\ref{l:ord:2}(a)$ we know that
		\[ \gcd\biggl(t, \frac{a^m-1}{r}\biggr) = 1 \quad \text{and} \quad \gcd(4,t) \mid r. \]
	It remains to show that every prime divisor of~$t$ divides~$r$. Seeking a contradiction, assume that some prime $s$ divides $t$ and $s \nmid r$. By Lemma~$\ref{l:ord:1}(c)$ we have $\ord{a}{rt} = \lcm\bigl\{ \ord{a}{r (t)_{s'}}, \ord{a}{(t)_s}\bigr\}$, whence
		\begin{align}
			\ord{a}{rt} \leq \ord{a}{r (t)_{s'}} \, \ord{a}{(t)_s}.\label{eq:ord:3:1}
		\end{align}
	Note that by Lemma~$\ref{l:ord:1}(f)$ we have
		\begin{align}
			\ord{a}{r (t)_{s'}} \leq  \ord{a}{r} \,(t)_{s'}.\label{eq:ord:3:2}
		\end{align}	
	 Since $(t)_s > 1$, according to Lemma~$\ref{l:ord:1}(e)$ we get $\ord{a}{(t)_s} \mid (s-1)(t)_s/s$ and thus $\ord{a}{(t)_s} < (t)_s$. Combining the latter with $\eqref{eq:ord:3:1}$ and~$\eqref{eq:ord:3:2}$ yields the contradiction~$\ord{a}{rt} < \ord{a}{r}t$.%
\end{proof}%

Recall from Lemma~$\ref{l:ord:1}(b)$ that, given $a,r,t \in \mathbb{N}$ such that $a$ and $rt$ are coprime, we have $\ord{a}{r} \mid \ord{a}{rt}$, that is $\ord{a}{rt} / \ord{a}{r} \in \mathbb{N}$.%

\begin{lemma}\label{l:ord:4}
	Let $a, r, t\in \mathbb{N}$ be such that $a\geq 2$ and $\gcd(a,rt) = 1$. Let $m = \ord{a}{r}$ and suppose that $mt/2<\ord{a}{rt} < mt$. Let
		\[s = t / \gcd\biggl( \frac{\ord{a}{rt}}{m}, t \biggr).\]
	Then $s$ is an odd prime divisor of~$t$, $s\nmid a^m-1$, and $\ord{a}{r(t)_{s'}} = m(t)_{s'}$.%
\end{lemma}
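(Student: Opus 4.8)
The plan is to first locate the prime $s$ from the coarse structure of $t$ relative to $r$, and then pin down its exact value using the lower bound $\ord{a}{rt}>mt/2$. Write $d=\ord{a}{rt}$; by Lemma~\ref{l:ord:1}(b) we have $m\mid d$, so $d/m\in\mathbb{N}$, and the hypothesis reads $mt/2<d<mt$ (in particular $t\geq 2$). Since $d>mt/2$, Lemma~\ref{l:ord:2}(a) gives $\gcd\bigl(t,(a^m-1)/r\bigr)=1$ and $\gcd(4,t)\mid r$. If every prime divisor of $t$ divided $r$, then, combining these two facts, condition~(b) of Proposition~\ref{p:ord:3} would hold, forcing $d=mt$ and contradicting $d<mt$; hence some prime $s_0$ divides $t$ but not $r$. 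By Lemma~\ref{l:ord:2}(b) this $s_0$ is the unique prime divisor of $t$ not dividing $r$, and $s_0\neq 2$. Moreover $s_0\mid t$ together with $\gcd\bigl(t,(a^m-1)/r\bigr)=1$ gives $s_0\nmid(a^m-1)/r$; since also $s_0\nmid r$ and $a^m-1=r\cdot\frac{a^m-1}{r}$, we conclude $s_0\nmid a^m-1$.

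Next I would treat the $s_0'$-part. Every prime divisor of $(t)_{s_0'}$ is a prime divisor of $t$ other than $s_0$, hence divides $r$ (by uniqueness of $s_0$) but not $(a^m-1)/r$, and $\gcd\bigl(4,(t)_{s_0'}\bigr)\mid\gcd(4,t)\mid r$; so Proposition~\ref{p:ord:3}, applied to the triple $\bigl(a,r,(t)_{s_0'}\bigr)$, yields $\ord{a}{r(t)_{s_0'}}=m(t)_{s_0'}$. It then remains to show $s=s_0$, since the three assertions will then be exactly what we have established. Write $(t)_{s_0}=s_0^k$ with $k\geq 1$. As $s_0^k$ is coprime to $a\,r\,(t)_{s_0'}$, Lemma~\ref{l:ord:1}(c) gives $d=\lcm\{m(t)_{s_0'},b\}$ with $b:=\ord{a}{s_0^k}$, and Lemma~\ref{l:ord:1}(e) gives $b\mid(s_0-1)s_0^{k-1}$. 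From $m(t)_{s_0'}\mid d$ and $d\mid m(t)_{s_0'}b$ one extracts $d/m=(t)_{s_0'}c$ with $c\mid b$; writing $c=c_1s_0^{c_2}$ with $c_1\mid s_0-1$ and $c_2\leq k-1$, a short computation (using that $(t)_{s_0'}$ is coprime to $s_0$) gives $\gcd(d/m,t)=s_0^{c_2}(t)_{s_0'}$, hence $s=t/\gcd(d/m,t)=s_0^{k-c_2}$ with $k-c_2\geq 1$. Finally, substituting $d=m(t)_{s_0'}c_1s_0^{c_2}\leq m(t)_{s_0'}(s_0-1)s_0^{c_2}$ into $d>mt/2=m(t)_{s_0'}s_0^{k}/2$ gives $s_0^{k-c_2}<2(s_0-1)<2s_0$; since $s_0\geq 3$, this forces $k-c_2=1$, that is $s=s_0$.

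I expect the final identification $s=s_0$ to be the main obstacle; everything before it is essentially bookkeeping with Lemmas~\ref{l:ord:1},~\ref{l:ord:2} and Proposition~\ref{p:ord:3}. The delicate points there are: extracting the clean factorisation $d/m=(t)_{s_0'}c$ with $c\mid b$ out of the least common multiple; isolating the exact power of $s_0$ occurring in $\gcd(d/m,t)$; and squeezing $s_0^{k-c_2}$ strictly below $2s_0$ using only the lower bound $d>mt/2$ together with $b\mid(s_0-1)s_0^{k-1}$, which is where the oddness of $s_0$ (i.e.\ $s_0\geq 3$) is essential.
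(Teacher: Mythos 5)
Your proof is correct and follows essentially the same route as the paper: Lemma~\ref{l:ord:2}(a), Proposition~\ref{p:ord:3} and Lemma~\ref{l:ord:2}(b) to produce the unique odd prime $s_0\mid t$ with $s_0\nmid r$ (hence $s_0\nmid a^m-1$) and to get $\ord{a}{r(t)_{s_0'}}=m(t)_{s_0'}$, then Lemma~\ref{l:ord:1}(c),(e) to control the $s_0$-part. The only divergence is the endgame: the paper notes $\ord{a}{rt}$ divides $mt(s_0-1)/s_0$ and, being greater than $mt/2$, must equal it, after which $s=s_0$ drops out of a one-line gcd computation, whereas you keep $\ord{a}{rt}$ undetermined and instead squeeze the power $s_0^{k-c_2}$ below $2s_0$ — slightly more bookkeeping, but equally valid.
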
%

\begin{proof}
	Let $e = \ord{a}{r t}$. Since $e > mt/2$, from Lemma~$\ref{l:ord:2}(a)$ we know that
		\[\gcd \Bigl(t, \frac{a^m-1}{r}\Bigr) = 1 \quad \text{and} \quad \gcd(4,t) \mid r.\] 
		If all prime divisors of~$t$ divide~$r$, then Proposition~$\ref{p:ord:3}$ yields the contradiction $e = mt$. 
		Hence there exists a prime divisor $\ell$ of $t$ not dividing~$r$.
		By Lemma~$\ref{l:ord:2}(b)$, $\ell$ is the unique prime divisor of~$t$ which does not divide~$r$.
		Recalling that $\gcd\bigl(t, (a^m-1)/r\bigr) = 1$, it follows that
			\begin{align}
				\ell \nmid a^m-1\label{eq:ord:4:1}
			\end{align}		
		and that
			\begin{align}
				\text{every prime divisor of $(t)_{\ell'}$ divides~$r$ but not } \frac{a^m-1}{r}.\label{eq:ord:4:2}
			\end{align}
		Further, by Lemma~$\ref{l:ord:2}(b)$,
			\begin{align}
				 \ell \neq 2.\label{eq:ord:4:3}
			\end{align}
		Now, since $\gcd(4,t)$ divides $r$ (and thus $\gcd(4, (t)_{\ell'}) \mid r$), recalling that $\eqref{eq:ord:4:2}$ holds, Proposition~$\ref{p:ord:3}$ yields
		\begin{align}
			\ord{a}{r (t)_{\ell'} } = m(t)_{\ell'}.\label{eq:ord:4:4}
		\end{align}
	By Lemma~$\ref{l:ord:1}(e)$ we have
		\begin{align}
			\ord{a}{(t)_\ell} \mid  \frac{(\ell-1) (t)_\ell}{\ell}.\label{eq:ord:4:5}
		\end{align}
	Combining $\eqref{eq:ord:4:4}$, $\eqref{eq:ord:4:5}$ with Lemma~$\ref{l:ord:1}(c)$ (according to which $e$ is equal to the least common multiple, and thus divides the product, of $\ord{a}{r (t)_{\ell'} }$ and $\ord{a}{(t)_\ell}$), reveals that $e \mid  mt(\ell-1)/\ell$.
	Since (by assumption) the integer $e$ is strictly bigger than $mt/2$, it follows that $e = mt(\ell-1)/\ell.$ Then
		\[s = \frac{t}{\gcd\bigl( \frac{t(\ell-1)}{\ell}, t \bigr)} = \frac{t}{\frac{t}{\ell}\underbrace{\gcd(\ell-1, \ell )}_{=1}} = \ell,\]
	and the assertion holds by $\eqref{eq:ord:4:1}$, $\eqref{eq:ord:4:3}$, and $\eqref{eq:ord:4:4}$.
	\end{proof}%

\subsection{Roots of (irreducible) polynomials over finite fields}

The set of all non-zero elements in~$\mathbb{F}_q$ forms a cyclic group under multiplication, and we denote this group by~$\mathbb{F}_q^*$. The \emph{order} of a non-zero element~$\alpha \in \mathbb{F}_q$, written as~$\vert \alpha \vert$, refers to the order of~$\alpha$ in the cyclic group~$\mathbb{F}_q^*$. 
By saying \emph{root} of~$f \in \mathbb{F}_q[x]$ we mean an element $\omega$ in some possibly non-proper extension field of~$\mathbb{F}_q$ satisfying $f(\omega) = 0$. The \emph{splitting field} of~$f \in \mathbb{F}_q[x]$ is the smallest (with respect to inclusion) extension field of~$\mathbb{F}_q$ which contains all roots of~$f$.
We shall be using the following well-known properties of roots of irreducible polynomials. (See for example \cite{MR1429394} for a reference.)

\begin{lemma}\label{l:poly:1}
	Let $m \in \mathbb{N}$, and let $f \in \mathbb{F}_q[x]$ be irreducible of degree~$m$.
	\begin{enumerate}[$(a)$]
		\item The polynomial $f$ has $m$ distinct roots.
		\item If $\omega$ is a root of~$f$, then $\omega, \omega^q, \omega^{q^2}, \dots, \omega^{q^{m-1}}$ are all the roots of~$f$.
		\item The splitting field of~$f$ over~$\mathbb{F}_q$ is given by~$\mathbb{F}_{q^m}$.
		\item If $f \neq x$, then all roots of~$f$ lie in~$\mathbb{F}_{q^m}^*$ and have the same order.
		\item If $e \in \mathbb{F}_q[x]$ is irreducible and some root of~$e$ is a root of~$f$, then $e = \alpha f$ for some $\alpha \in \mathbb{F}_q^*$.%
	\end{enumerate}
\end{lemma}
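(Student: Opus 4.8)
The plan is to run every part off two standard tools: the $q$-power Frobenius automorphism $\sigma\colon\overline{\mathbb{F}_q}\to\overline{\mathbb{F}_q}$, $\omega\mapsto\omega^q$ (a field automorphism fixing $\mathbb{F}_q$ pointwise, so that $f(\omega)^q=f(\omega^q)$ for all $\omega$ because $f$ has coefficients in $\mathbb{F}_q$), and the identification, for any root $\omega$ of $f$ in its splitting field, of $f$ up to a scalar with the minimal polynomial of $\omega$ over $\mathbb{F}_q$. The latter gives an $\mathbb{F}_q$-algebra isomorphism $\mathbb{F}_q[x]/(f)\cong\mathbb{F}_q(\omega)$ (using that $f$ irreducible means $(f)$ is maximal), so $|\mathbb{F}_q(\omega)|=q^m$ and hence $\mathbb{F}_q(\omega)=\mathbb{F}_{q^m}$ by uniqueness of finite fields. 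For part~$(a)$ I would then note that $\omega\in\mathbb{F}_{q^m}$ is a root of $x^{q^m}-x$, so $f$ (up to a unit, the minimal polynomial of $\omega$) divides $x^{q^m}-x$; since $(x^{q^m}-x)'=-1$ the polynomial $x^{q^m}-x$ is squarefree, whence $f$ is squarefree and therefore has $m$ distinct roots.

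For part~$(b)$: applying $\sigma$ repeatedly to $f(\omega)=0$ shows $\omega,\omega^q,\dots,\omega^{q^{m-1}}$ are all roots of $f$, and by~$(a)$ it suffices to check they are pairwise distinct. If $\omega^{q^i}=\omega^{q^j}$ with $0\le i<j\le m-1$, then applying $\sigma^{-i}$ (legitimate, as $\sigma$ is an automorphism of $\overline{\mathbb{F}_q}$) gives $\omega=\omega^{q^{j-i}}$, so $\omega$ lies in the fixed field $\mathbb{F}_{q^{j-i}}$ of $\sigma^{j-i}$; but then $\mathbb{F}_{q^m}=\mathbb{F}_q(\omega)\subseteq\mathbb{F}_{q^{j-i}}$, forcing $m\mid j-i$, which contradicts $0<j-i<m$. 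Part~$(c)$ then follows immediately: the splitting field of $f$ is generated over $\mathbb{F}_q$ by $\omega,\dots,\omega^{q^{m-1}}\in\mathbb{F}_{q^m}$, so it is contained in $\mathbb{F}_{q^m}$, while it contains $\mathbb{F}_q(\omega)=\mathbb{F}_{q^m}$; hence it equals $\mathbb{F}_{q^m}$.

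For part~$(d)$, if $f\neq x$ then $x\nmid f$ (since $f$ is irreducible), so $f(0)\neq0$ and no root of $f$ is $0$; by~$(b)$ and~$(c)$ all roots lie in $\mathbb{F}_{q^m}^*$. Since every root is of the form $\omega^{q^i}$ and $\gcd(q,\,q^m-1)=1$ with $|\mathbb{F}_{q^m}^*|=q^m-1$, the map $x\mapsto x^q$ is an automorphism of the cyclic group $\mathbb{F}_{q^m}^*$ and preserves orders, so all roots have the same order as $\omega$. For part~$(e)$, if $e$ and $f$ share a root $\omega$, then $\gcd(e,f)$ computed in $\mathbb{F}_q[x]$ also vanishes at $\omega$, hence is non-constant; as $e$ and $f$ are both irreducible, $\gcd(e,f)$ is an associate of each, so $e=\alpha f$ for some $\alpha\in\mathbb{F}_q^*$.

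I do not anticipate a real obstacle here, as all five statements are classical; the only points needing care are (i) in~$(a)$, being explicit that "$f$ irreducible of degree $m$" lets us treat $f$, up to a unit, as the minimal polynomial of any of its roots, and (ii) isolating the degenerate case $f=\alpha x$ (equivalently $\omega=0$, $m=1$), where parts~$(b)$ and~$(d)$ are trivial and the divisibility/order arguments inside $\mathbb{F}_{q^m}^*$ would otherwise not apply.
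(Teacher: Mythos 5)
Your proof is correct. The paper does not prove this lemma at all -- it states these facts as well known and simply cites \cite{MR1429394} -- and your argument (Frobenius automorphism, identification of $f$ up to a scalar with the minimal polynomial of a root, squarefreeness of $x^{q^m}-x$, and the gcd argument for part~$(e)$) is exactly the standard proof one finds in that reference, including your correct handling of the degenerate case of associates of $x$ in part~$(d)$.
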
%

Given $m \in \mathbb{N}$, the subfields of~$\mathbb{F}_{q^m}$ which contain~$\mathbb{F}_q$ are precisely the fields~$\mathbb{F}_{q^n}$ with $n \mid m$. 
Since $\mathbb{F}_{q^n}^*$ is cyclic of order $q^n-1$, it follows that an element $\omega \in \mathbb{F}_{q^m}^*$  lies in a proper subfield of~$\mathbb{F}_{q^m}$ containing $\mathbb{F}_q$ if and only if $\vert \omega\vert$ divides $q^n-1$ for some proper divisor $n$ of $m$. Recalling the notion of $\ord{a}{r}$, we obtain the following.

\begin{lemma}\label{l:poly:2}
	Let $m \in \mathbb{N}$. An element $\omega \in \mathbb{F}_{q^m}^*$ does not lie in any proper subfield of~$\mathbb{F}_{q^m}$ containing $\mathbb{F}_q$ if and only if $\ord{q}{\vert \omega \vert} = m$.%
\end{lemma}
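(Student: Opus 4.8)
The plan is to unwind the definitions on both sides and reduce the claim to the description of subfields of $\mathbb{F}_{q^m}$ recalled just before the statement. Fix $m \in \mathbb{N}$ and a nonzero element $\omega \in \mathbb{F}_{q^m}^*$, and let $r = \vert \omega \vert$ be its multiplicative order. Since $\omega \in \mathbb{F}_{q^m}^*$ we have $r \mid q^m - 1$, so in particular $\gcd(q, r) = 1$ and the quantity $\ord{q}{r}$ is well defined; moreover by Lemma~\ref{l:ord:1}(a), $r \mid q^m - 1$ gives $\ord{q}{r} \mid m$. Write $d = \ord{q}{r}$, so $d$ is a divisor of $m$.

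Next I would identify, for each divisor $n$ of $m$, exactly when $\omega$ lies in the subfield $\mathbb{F}_{q^n}$. As noted in the paragraph preceding the lemma, the subfields of $\mathbb{F}_{q^m}$ containing $\mathbb{F}_q$ are precisely the $\mathbb{F}_{q^n}$ with $n \mid m$, and $\omega \in \mathbb{F}_{q^n}$ if and only if $\omega$ belongs to the cyclic group $\mathbb{F}_{q^n}^*$ of order $q^n - 1$, i.e.\ if and only if $r = \vert \omega \vert$ divides $q^n - 1$. By Lemma~\ref{l:ord:1}(a) this last condition is equivalent to $d = \ord{q}{r}$ dividing $n$. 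Thus, for divisors $n \mid m$,
\[
	\omega \in \mathbb{F}_{q^n} \iff d \mid n.
\]

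From this equivalence both directions follow readily. If $\ord{q}{r} = m$, i.e.\ $d = m$, then the only divisor $n$ of $m$ with $d \mid n$ is $n = m$ itself, so $\omega$ lies in no subfield $\mathbb{F}_{q^n}$ with $n$ a proper divisor of $m$; hence $\omega$ lies in no proper subfield of $\mathbb{F}_{q^m}$ containing $\mathbb{F}_q$. Conversely, if $d < m$, then $d$ is a proper divisor of $m$ with $d \mid d$, so $\omega \in \mathbb{F}_{q^d}$, a proper subfield of $\mathbb{F}_{q^m}$ containing $\mathbb{F}_q$; contrapositively, if $\omega$ lies in no such proper subfield then $d = m$.

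Honestly, there is no real obstacle here: the whole content is the translation ``divisibility of $q^n - 1$ by $r$'' $\leftrightarrow$ ``$\ord{q}{r} \mid n$'' supplied by Lemma~\ref{l:ord:1}(a), combined with the already-recalled classification of subfields. The only point demanding a word of care is making sure $\ord{q}{r}$ makes sense, which is handled by observing $\gcd(q,r)=1$ from $r \mid q^m-1$; everything else is immediate.
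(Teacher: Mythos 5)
Your proof is correct and follows essentially the same route as the paper: the paper establishes the lemma via the preceding remarks that the subfields of $\mathbb{F}_{q^m}$ containing $\mathbb{F}_q$ are exactly the $\mathbb{F}_{q^n}$ with $n \mid m$, that $\omega$ lies in $\mathbb{F}_{q^n}$ precisely when $\vert\omega\vert$ divides $q^n-1$, and then translates this divisibility via the definition of $\ord{q}{\vert\omega\vert}$ (your explicit appeal to Lemma~\ref{l:ord:1}(a) is just a slightly more detailed bookkeeping of that translation). No gaps; your remark that $\gcd(q,\vert\omega\vert)=1$ follows from $\vert\omega\vert \mid q^m-1$ is a fine, if minor, extra check.
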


Whether or not a polynomial is irreducible can be read from the order of any of its non-zero roots.

\begin{lemma}\label{l:poly:3}
	Let $f\in \mathbb{F}_q[x]$ contain a non-zero root~$\omega$. Then $f$ is irreducible if and only if $\ord{q}{\vert \omega \vert} = \deg(f)$.
\end{lemma}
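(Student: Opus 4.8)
Write $m = \deg(f)$ (so $m \geq 1$, since $f$ has a root). The plan is to reduce both directions to Lemma~\ref{l:poly:2}, which translates the condition $\ord{q}{\vert\omega\vert} = m$ into the geometric statement that $\omega$ lies in no proper subfield of $\mathbb{F}_{q^m}$ containing $\mathbb{F}_q$.

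For the implication ``$f$ irreducible $\Rightarrow \ord{q}{\vert\omega\vert} = m$'', I would argue as follows. Since $\omega \neq 0$, the polynomial $f$ is not a scalar multiple of $x$, so Lemma~\ref{l:poly:1}(d) places $\omega$ in $\mathbb{F}_{q^m}^*$. By Lemma~\ref{l:poly:1}(a),(b) the $m$ elements $\omega, \omega^q, \dots, \omega^{q^{m-1}}$ are pairwise distinct, hence $\omega^{q^n} \neq \omega$ for every $n$ with $1 \leq n \leq m-1$. As $\mathbb{F}_{q^n} = \{\alpha : \alpha^{q^n} = \alpha\}$, this shows $\omega$ lies in no subfield $\mathbb{F}_{q^n}$ with $n < m$, and in particular in no proper subfield of $\mathbb{F}_{q^m}$ containing $\mathbb{F}_q$ (these being exactly the $\mathbb{F}_{q^n}$ with $n \mid m$, as recalled before Lemma~\ref{l:poly:2}). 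Lemma~\ref{l:poly:2} then yields $\ord{q}{\vert\omega\vert} = m = \deg(f)$.

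For the converse, suppose $\ord{q}{\vert\omega\vert} = m$. Factor $f = c\, e_1 \cdots e_k$ with $c \in \mathbb{F}_q^*$ and the $e_i$ monic irreducible; from $f(\omega) = 0$ and the fact that $\omega$ lies in a field we get $e_i(\omega) = 0$ for some $i$, so put $e = e_i$. Then $e$ is irreducible with the nonzero root $\omega$, so the implication already proved (applied to $e$ in place of $f$) gives $\ord{q}{\vert\omega\vert} = \deg(e)$; comparing with the hypothesis forces $\deg(e) = m = \deg(f)$. Since $e \mid f$ and $e, f$ have equal degree, $f = c\, e$, whence $f$ is irreducible.

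I do not anticipate a serious obstacle here: the argument is essentially bookkeeping on top of Lemmas~\ref{l:poly:1} and~\ref{l:poly:2} and unique factorisation in $\mathbb{F}_q[x]$. The two points that need a little care are (i) stating the forward implication for an \emph{arbitrary} irreducible polynomial, so that it can be invoked for the factor $e$ in the converse without circularity, and (ii) the passage ``$\omega^{q^n} = \omega \iff \omega \in \mathbb{F}_{q^n}$'' together with the description of the subfields of $\mathbb{F}_{q^m}$, which is the standard fact recalled in the text immediately preceding Lemma~\ref{l:poly:2}.
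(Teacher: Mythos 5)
Your proof is correct and follows essentially the same route as the paper: the forward direction reduces to Lemma~\ref{l:poly:2}, and the converse applies the already-proved forward implication to an irreducible factor of $f$ vanishing at $\omega$, exactly as in the paper. The only (immaterial) difference is how you exclude $\omega$ from proper subfields: you use the pairwise distinctness of the conjugates $\omega, \omega^q, \dots, \omega^{q^{m-1}}$ together with the Frobenius fixed-field description of $\mathbb{F}_{q^n}$, whereas the paper argues that $f$ would otherwise split over a proper subfield, contradicting Lemma~\ref{l:poly:1}(c).
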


\begin{proof}	
	Assume that $f$ is irreducible. Let $m = \deg(f)$. By Lemma~$\ref{l:poly:1}(c)$, $\omega \in \mathbb{F}_{q^m}$. If $\omega$ lies in a proper subfield~$\mathbb{K}$ of~$\mathbb{F}_{q^m}$ containing~$\mathbb{F}_q$, then by  Lemma~$\ref{l:poly:1}(b)$ all roots of~$f$ lie in~$\mathbb{K}$, in which case $f$ splits over~$\mathbb{K}$. As this contradicts Lemma~$\ref{l:poly:1}(c)$, Lemma~$\ref{l:poly:2}$ yields $\ord{q}{\vert \omega \vert} = m$.
	
	Conversely, suppose that $\ord{q}{\vert \omega \vert} = \deg(f)$. Let $f_0 \in \mathbb{F}_q[x]$ be an irreducible factor of~$f$ with $f_0(\omega) = 0$. By the first part of this proof we get $\deg(f_0) = \ord{q}{\vert \omega \vert}$. Hence, $\deg(f_0) = \deg(f)$ and $f$ is irreducible.
\end{proof}

Recall from Definition~$\ref{d:num t-hyper-irred}$ that $N_q^*(m)$ is the number of all monic, irreducible polynomials $f \neq x$ of degree~$m$ in~$\mathbb{F}_q[x]$. The precise value of~$N_q^*(m)$ can be calculated via the formula~$\eqref{eq:N_q(m)}$. We give a (good) lower and an upper bound for~$N_q^*(m)$ in Lemma~$\ref{l:poly:5}(b)$ below. Proving the lower bound for~$N_q^*(m)$ involves the following estimate.

\begin{lemma}\label{l:poly:4}
	Let $a,m \in \mathbb{N}$ be such that $a,m\geq 2$ and $(m,a) \notin \{ (2,2), (4,2)$, $(6,2) \}$. Then $\sum_{n\mid m, n < m} (a^n-1) < (a^m-1)/(m+1)$.
\end{lemma}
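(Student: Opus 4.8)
The plan is to bound the left-hand side by replacing the sparse set of proper divisors of $m$ by the full interval of positive integers up to $\lfloor m/2\rfloor$, reduce the assertion to a single clean inequality, and then split into the easy regime $a\geq 3$ and the tight regime $a=2$.

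Concretely, set $v=\lfloor m/2\rfloor$. Every proper divisor $n$ of $m$ satisfies $m/n\geq 2$, hence $n\leq m/2$, and since $n$ is an integer we get $n\leq v$; therefore
\[
  \sum_{n\mid m,\ n<m}(a^n-1)\ \leq\ \sum_{n=1}^{v}(a^n-1)\ =\ \frac{a^{v+1}-a}{a-1}-v\ =:\ T .
\]
Thus it suffices to prove that $(m+1)\,T<a^m-1$; call this inequality $(\star)$. First I would dispose of $m=2$: then $v=1$ and $(\star)$ reads $3(a-1)<a^2-1$, i.e.\ $a>2$, which is exactly the hypothesis $(m,a)\neq(2,2)$ in this case. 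For $a\geq 3$ and $m\geq 3$ one has $T<\frac{a^{v+1}}{a-1}\leq\frac{a^{v+1}}{2}$, so $(\star)$ follows once $(m+1)\,a^{v+1}\leq 2a^m-2$; distinguishing $m=2v$ from $m=2v+1$, this reduces respectively to $v+1\leq a^{v-1}$ and $v+2\leq a^{v}$, each of which holds for $a\geq 3$ and the relevant range of $v$ (the base cases give equality $3=3$, and an easy induction in which the right-hand side multiplies by $a\geq 3$ while the left-hand side grows by $1$ finishes it).

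For $a=2$ the inequality $(\star)$ is genuinely tight, so I would simply verify it by hand for $2\leq m\leq 9$: it holds for $m\in\{3,5,7,8,9\}$ and fails precisely for $m\in\{2,4,6\}$, which are exactly the three excluded pairs. For $m\geq 10$ we have $v\geq 5$, and writing $T=2^{v+1}-2-v$ the inequality $(\star)$ follows from $(m+1)\,2^{v+1}\leq 2^{m}$ together with the strict positivity of the subtracted correction term; this in turn amounts to $2v+1\leq 2^{v-1}$ (for even $m$) or $2v+2\leq 2^{v-1}$ (for odd $m$), both of which hold for all $v\geq 5$ by an obvious induction.

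The one step that needs care -- and the real source of the three exceptions -- is the reduction to $(\star)$. One must keep the full geometric sum $\sum_{n=1}^{v}(a^n-1)=\frac{a^{v+1}-a}{a-1}-v$ rather than coarsen it to $\sum_{n=1}^{v}a^n<a^{v+1}$: with the coarser bound the resulting inequality $(m+1)\,a^{v+1}\leq a^m-1$ already fails at $(m,a)=(8,2)$, a pair that must be \emph{included} in the statement. Retaining the $-v$ correction term (equivalently, using the exact value of the sum) is precisely what separates the included case $(8,2)$ from the excluded case $(6,2)$; after that, the argument is a short finite check at $a=2$ together with two routine induction arguments.
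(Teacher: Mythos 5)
Your proof is correct and follows essentially the same route as the paper: bound the sum over proper divisors by the full sum over all $n\le\lfloor m/2\rfloor$, compare it with the geometric growth of $a^m$ relative to $m+1$, and settle the tight cases by a finite verification. The only difference is organisational --- you split on $a$ (a uniform estimate for $a\ge 3$, then a finite check for $a=2$, $m\le 9$, plus an induction for $m\ge 10$, keeping the exact truncated sum $\frac{a^{v+1}-a}{a-1}-v$), whereas the paper checks $2\le m\le 8$ directly and runs a single coarser estimate $\sum_{i=0}^{\lfloor m/2\rfloor}a^i<\frac{a^m-1}{m+1}$ for all $m\ge 9$.
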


\begin{proof}
	For a rational number $r$, let $\lceil r \rceil$ denote the smallest integer which is at least $r$, and let $\lfloor r \rfloor$ be the largest integer not greater than $r$.	
	
	First, suppose that $2 \leq m \leq 8$. Then the inequality $\sum_{n\mid m, n < m} (a^n-1) < (a^m-1)/(m+1)$ is equivalent to 
	\[ \begin{cases}
				a^2 -3a +2 > 0, & \text{if } m = 2, \\
				a^3-4a+3 > 0, & \text{if } m= 3, \\
				a^4 - 5a^2 -5a +9>0, & \text{if } m = 4, \\
				a^5 - 6a +5 > 0, & \text{if } m= 5,\\
				a^6 -7a^3 -7a^2-7a +20 >0, & \text{if } m = 6,\\
				a^7 -8a+7 > 0, & \text{if } m=7,\\
				a^8 - 9a^4-9a^2-9a+26 >0, &\text{if } m=8.
			\end{cases}\]
	Recalling that $a \geq 3$ if $m \in \{2,4,6\}$, one can easily verify that the assertion is true. So suppose that $m\geq 9$. Then $a^{\lceil m/2\rceil-1} > m+1$, and hence
		\[ a^{\lfloor m/2\rfloor +1}-1 < \underbrace{\frac{a^{\lceil m/2 \rceil-1}}{m+1}}_{> 1} a^{\lfloor m/2\rfloor +1} -1 = \frac{a^m}{m+1}-1 < \frac{a^m-1}{m+1}.\]	
	Then (using $a-1 \geq 1$) we have $(a^{\lfloor m/2\rfloor +1}-1)/(a-1) < (a^m-1)/(m+1)$, which is the same as saying that
		\begin{align}
			\sum_{i=0}^{\lfloor m/2\rfloor} a^i < \frac{a^m-1}{m+1}.\label{eq:poly:4:1}
		\end{align}
	Now, a proper divisor of~$m$ is at most equal to $\lfloor m/2\rfloor$. It follows that, $\sum_{n\mid m, n < m} (a^n-1) \leq \sum_{i=1}^{\lfloor m/2 \rfloor} (a^i-1) < \sum_{i=0}^{\lfloor m/2\rfloor} a^i$, which combined with~$\eqref{eq:poly:4:1}$ yields $\sum_{n\mid m, n < m} (a^n-1) < (a^m-1)(m+1)$, as needed.
\end{proof}

\begin{lemma}\label{l:poly:5}
	Let $m \in \mathbb{N}$. Then the following hold.
	\begin{enumerate}[$(a)$]
		\item The number of elements in~$\mathbb{F}_{q^m}^*$ which do not lie in any proper subfield of~$\mathbb{F}_{q^m}$ containing~$\mathbb{F}_q$ is given by $mN_q^*(m)$.
		\item We have $N_q^*(1) = q-1$. If $m \geq 2$ then
				\[\frac{q^m-1}{m+1} \leq N_q^*(m)  < \frac{q^m-1}{m}.\]
	\end{enumerate}
\end{lemma}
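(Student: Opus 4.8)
For part~$(a)$, the plan is to partition the elements of $\mathbb{F}_{q^m}^*$ that avoid all proper subfields of $\mathbb{F}_{q^m}$ containing $\mathbb{F}_q$ according to their minimal polynomial over $\mathbb{F}_q$. Given such an element $\omega$, let $f$ be its minimal polynomial. By Lemma~\ref{l:poly:2} we have $\ord{q}{|\omega|} = m$, so Lemma~\ref{l:poly:3} tells us $f$ is irreducible of degree~$m$; moreover $f \neq x$ since $\omega \neq 0$, and we may take $f$ monic. Conversely, every monic irreducible $f \neq x$ of degree~$m$ arises this way: by Lemma~\ref{l:poly:1}$(a)$ it has $m$ distinct roots, all in $\mathbb{F}_{q^m}^*$ by Lemma~\ref{l:poly:1}$(d)$, and each root $\omega$ satisfies $\ord{q}{|\omega|} = m$ by Lemma~\ref{l:poly:3}, hence avoids every proper subfield by Lemma~\ref{l:poly:2}. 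Finally, by Lemma~\ref{l:poly:1}$(e)$ distinct monic irreducibles share no roots. Counting both sides of this correspondence: the left side has the stated cardinality, and the right side is a disjoint union over the $N_q^*(m)$ monic irreducibles $f \neq x$ of degree~$m$, each contributing exactly $m$ elements, giving $m N_q^*(m)$.

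For part~$(b)$, the value $N_q^*(1) = q-1$ is immediate from $\eqref{eq:N_q(m)}$ (or directly: the monic linear polynomials $\neq x$ are $x - c$ for $c \in \mathbb{F}_q^*$). For $m \geq 2$, I would combine part~$(a)$ with the observation that the set counted there is contained in $\mathbb{F}_{q^m}^*$, which has $q^m - 1$ elements, and its complement consists precisely of those $\omega \in \mathbb{F}_{q^m}^*$ lying in some proper subfield $\mathbb{F}_{q^n}$ with $n \mid m$, $n < m$. The union of the groups $\mathbb{F}_{q^n}^* \setminus \{0\}$ over proper divisors $n$ has size at most $\sum_{n \mid m,\, n < m}(q^n - 1)$, so
\[
m N_q^*(m) \geq (q^m-1) - \sum_{n \mid m,\, n<m}(q^n-1).
\]
The upper bound $m N_q^*(m) < q^m - 1$ is clear since the set in part~$(a)$ is a proper subset of $\mathbb{F}_{q^m}^*$ (e.g.\ an element of $\mathbb{F}_q^* \subseteq \mathbb{F}_{q^m}$ lies in the proper subfield $\mathbb{F}_q$ when $m \geq 2$), giving $N_q^*(m) < (q^m-1)/m$.

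The lower bound is where the main care is needed. For $q \geq 3$, or for $q = 2$ with $m \notin \{2,4,6\}$, Lemma~\ref{l:poly:4} (applied with $a = q$) gives $\sum_{n\mid m,\, n<m}(q^n-1) < (q^m-1)/(m+1)$, whence $m N_q^*(m) > (q^m-1) - (q^m-1)/(m+1) = m(q^m-1)/(m+1)$, i.e.\ $N_q^*(m) > (q^m-1)/(m+1)$, which is stronger than needed. The remaining cases are the three exceptional pairs $(q,m) \in \{(2,2),(2,4),(2,6)\}$ excluded from Lemma~\ref{l:poly:4}; these I would simply check by hand using $\eqref{eq:N_q(m)}$: one computes $N_2^*(2) = 1 \geq (2^2-1)/3 = 1$, $N_2^*(4) = 3 \geq (2^4-1)/5 = 3$, and $N_2^*(6) = 9 \geq (2^6-1)/7 = 9$, so the (non-strict) inequality $(q^m-1)/(m+1) \leq N_q^*(m)$ holds with equality in each case — which is exactly why the displayed bound in the statement is non-strict on the left while Lemma~\ref{l:poly:4} gives strictness elsewhere. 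The only genuine obstacle is remembering to treat these three boundary cases separately rather than assuming Lemma~\ref{l:poly:4} applies universally.
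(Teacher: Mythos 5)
Your proposal is correct and takes essentially the same approach as the paper: part~$(a)$ via the correspondence between elements avoiding proper subfields and the roots of monic irreducibles $f \neq x$ of degree~$m$, and part~$(b)$ by combining part~$(a)$ with Lemma~\ref{l:poly:4} and checking the exceptional pairs $(q,m) \in \{(2,2),(2,4),(2,6)\}$ directly from~\eqref{eq:N_q(m)}, where equality $N_q^*(m) = (q^m-1)/(m+1)$ holds. Your upper-bound argument (the set in part~$(a)$ omits $\mathbb{F}_q^*$) differs only cosmetically from the paper's inequality $q^m-1 \geq N_q^*(1) + mN_q^*(m)$.
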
%

\begin{proof}
	\begin{enumerate}[$(a)$]
	\item Let $\widehat{f} \in \mathbb{F}_q[x]$ be the product of all monic, irreducible polynomials~$f \neq x$ of degree~$m$ over~$\mathbb{F}_q$, and let $R$ be the set of all roots of~$\widehat{f}$.
	By Lemma~$\ref{l:poly:1}(a)(e)$ each irreducible factor of~$\widehat{f}$ has~$m$ distinct roots, and no two distinct irreducible factors of~$\widehat{f}$ have a root in common. Hence,
		\[ \vert R \vert = m N_q^*(m). \]
	By Lemma~$\ref{l:poly:1}(d)$, $R$ is a subset of~$\mathbb{F}_{q^m}^*$.
	If an irreducible factor~$f$~of~$\widehat{f}$ has a root in a proper subfield~$\mathbb{F}_{q^n}$ of~$\mathbb{F}_{q^m}$ then Lemma~$\ref{l:poly:1}(b)$ implies that all roots of~$f$ lie in~$\mathbb{F}_{q^n}$, which contradicts~$\mathbb{F}_{q^m}$ being the splitting field of $f$ over~$\mathbb{F}_q$ (see Lemma~$\ref{l:poly:1}(c)$). Hence,
		\[ R \subseteq \{ \alpha \in \mathbb{F}_{q^m}^* \mid \alpha \notin \mathbb{F}_{q^n} \text{ for all proper divisors $n$ of $m$} \}. \]
	Consider an element $\omega \in \mathbb{F}_{q^m}^*$ which does not lie in any proper subfield of~$\mathbb{F}_{q^m}$ containing~$\mathbb{F}_q$.
	By Lemma~$\ref{l:poly:2}$ we have $\ord{q}{\vert \omega \vert} = m$. Let $f \in \mathbb{F}_q[x]$ be the minimal polynomial of~$\omega$ over~$\mathbb{F}_q$. Since $f$ is irreducible and $f(\omega) = 0$, Lemma~$\ref{l:poly:3}$ yields $\deg(f) = m$. It follows that $\omega \in R$, whence	
		\[ R \supseteq \{ \alpha \in \mathbb{F}_{q^m}^* \mid \alpha \notin \mathbb{F}_{q^n} \text{ for all proper divisors $n$ of $m$} \}, \]
	which proves the assertion.
						
	\item By $\eqref{eq:N_q(m)}$ we have $N_q^*(1) = q-1$. Let $m\geq 2$. As we may deduce from part~$(a)$ of the current lemma 
	we have $\vert \mathbb{F}_{q^m}^*\vert = \sum_{n\mid m} nN_q^*(n)$, whence $q^m-1 \geq N_q^*(1) + mN_q^*(m)$. Recalling that $ N_q^*(1) = q-1 \geq 1$, we get $q^m -1 \geq 1 + mN_q^*(m)$, which proves the upper bound for $N_q^*(m)$.
	In order to verify the lower bound for $N_q^*(m)$, observe that by $\eqref{eq:N_q(m)}$, $(m+1) N_q^*(m) $ is equal to
		\[ \begin{cases} 
				\frac{3}{2}\bigl((2^2-1) \!-\!(2-1) \bigr) = 2^2-1, & \!\!\text{if } (m,q) = (2,2),\\
				\frac{5}{4}\bigl((2^4-1)\! -\! (2^2-1) + 0 (2^1-1)\bigr) =2^4-1, & \!\!\text{if } (m,q) = (4,2),\\
				\frac{7}{6} \bigl( (2^6-1) \!-\!(2^3-1) \!-\! (2^2-1) \!+\! (2-1) \bigr) = 2^6-1, & \!\!\text{if } (m,q) = (6,2).
			\end{cases} \]
	Hence, $N_q^*(m) = (q^m-1)/(m+1)$ if $(m,q) \in \{(2,2), (4,2),(6,2) \}$.
	
	Now, assume that  $(m,q) \notin \{(2,2), (4,2),(6,2)\}$. From part~$(a)$ of the current lemma we deduce that $mN_q^*(m) \geq \vert \mathbb{F}_{q^m}^*\vert - \sum \vert \mathbb{F}_{q^n}^* \vert $, where the sum is over all proper divisors~$n$ of~$m$. Then
		\begin{align*}
			mN_q^*(m) \geq (q^m-1) - \sum_{\substack{n\mid m, \\n <m}}  (q^n-1)
		\end{align*}
	which, using the inequality $\sum_{n\mid m,n <m} (q^n-1) < (q^m-1)/(m+1)$ given in Lemma~$\ref{l:poly:4}$, reveals that $mN_q^*(m) > (q^m-1)m/(m+1)$. Then $N_q^*(m) > (q^m-1)/(m+1)$, and the proof is complete. \qedhere
	\end{enumerate}%
\end{proof}%

\begin{lemma}\label{l:poly:6}
	Let $f \in \mathbb{F}_q[x]$ contain a non-zero root~$\omega$ and let $t \in \mathbb{N}$ be coprime to~$q$.
	Then $f(x^t)$ has a root of order $\vert \omega \vert t$.
\end{lemma}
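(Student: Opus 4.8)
The plan is to produce a root $\zeta$ of $f(x^{t})$ with $\zeta^{t}=\omega$ and $|\zeta|=|\omega|\,t$. Once such a $\zeta$ has been found we are done: $f(\zeta^{t})=f(\omega)=0$ shows that $\zeta$ is a root of $f(x^{t})$, and it has the asserted order. So the whole task reduces to exhibiting, in some finite extension of $\mathbb{F}_q$, an element $\zeta$ with $\zeta^{t}=\omega$ and $|\zeta|=|\omega|\,t$.

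First I would fix the right ground field. Put $n=|\omega|$. As $\omega$ is a non-zero element of some $\mathbb{F}_{q^{k}}$, we have $n\mid q^{k}-1$, so $\gcd(n,q)=1$; together with $\gcd(t,q)=1$ this gives $\gcd(nt,q)=1$, so $e:=\ord{q}{nt}$ is defined and $nt\mid q^{e}-1$. Since $n\mid nt\mid q^{e}-1$, we have $\omega^{q^{e}}=\omega$, hence $\omega\in\mathbb{F}_{q^{e}}^{*}$. Fix a generator $g$ of the cyclic group $\mathbb{F}_{q^{e}}^{*}$, set $M=q^{e}-1$, and write $\omega=g^{a}$. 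From $|\omega|=n$ it follows that $\gcd(a,M)=M/n$, and since $nt\mid M$ we have $t\mid M/n$, hence $t\mid a$. Writing $a=tc$ and $M_{1}=M/t$, this rephrases as $\gcd(c,M_{1})=M_{1}/n$; note that $n\mid M_{1}$.

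Next I would search for $\zeta$ among the powers of $g$. The condition $\zeta^{t}=\omega$ for $\zeta=g^{b}$ is $bt\equiv a\pmod{M}$, equivalently $b\equiv c\pmod{M_{1}}$, whose solutions modulo $M$ are the values $b=c+jM_{1}$ with $j\in\{0,\dots,t-1\}$. Put $D=M_{1}/n$. For each such $b$ one has $\gcd(b,M_{1})=\gcd(c,M_{1})=D$ and $D\mid b$, so $b=D(u+jn)$ with $u:=c/D$, where $\gcd(u,n)=1$. Using $M=tM_{1}=tDn$ one computes $\gcd(b,M)=D\,\gcd(u+jn,t)$, hence $|g^{b}|=M/\gcd(b,M)=tn/\gcd(u+jn,t)$. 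So everything reduces to finding $j\in\{0,\dots,t-1\}$ with $\gcd(u+jn,t)=1$.

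This last point is the only non-routine step, and I would handle it prime by prime. If a prime $p$ divides both $t$ and $n$, then $p\nmid u$ (since $\gcd(u,n)=1$), so $u+jn\equiv u\not\equiv 0\pmod{p}$ for every $j$. If $p\mid t$ but $p\nmid n$, then $n$ is invertible modulo $p$, and exactly one residue class of $j$ modulo $p$ makes $p\mid u+jn$; we simply avoid it. Combining these finitely many conditions via the Chinese Remainder Theorem yields a suitable $j\in\{0,\dots,t-1\}$. Taking $\zeta=g^{c+jM_{1}}$ then completes the argument: $\zeta^{t}=g^{a}=\omega$ and $|\zeta|=nt=|\omega|\,t$, so $\zeta$ is a root of $f(x^{t})$ of order $|\omega|\,t$. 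The main obstacle is precisely this elementary existence claim for $j$; the rest is routine bookkeeping with orders in the cyclic group $\mathbb{F}_{q^{e}}^{*}$.
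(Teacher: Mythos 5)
Your argument is correct, but it takes a genuinely different route from the paper. You work globally: you pass to the field $\mathbb{F}_{q^{e}}$ with $e=\ord{q}{\vert\omega\vert t}$ (which exists precisely because $\gcd(\vert\omega\vert t,q)=1$, the only place the hypothesis $\gcd(t,q)=1$ enters), write everything in terms of a generator $g$ of the cyclic group $\mathbb{F}_{q^{e}}^{*}$, and reduce the lemma to solving $bt\equiv a \pmod{q^{e}-1}$ subject to the coprimality condition $\gcd(u+jn,t)=1$, which you settle prime by prime via the Chinese Remainder Theorem; the bookkeeping ($\gcd(b,M)=D\gcd(u+jn,t)$ because $u+jn$ is coprime to $n$, and the radical of $t$ divides $t$ so the window $\{0,\dots,t-1\}$ contains a full residue system) all checks out. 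The paper instead observes that $x^{t}-\omega$ divides $f(x^{t})$ and argues by induction on the prime factorisation of $t$: for $t$ prime, a root $\alpha$ of $x^{t}-\omega$ either already has order divisible by $t$, or is corrected to $\alpha\beta$ with $\beta$ a primitive $t$-th root of unity (whose existence, for $\gcd(t,q)=1$, is quoted from Lidl--Niederreiter). Your version is self-contained apart from cyclicity of $\mathbb{F}_{q^{e}}^{*}$ and makes explicit in which field the root of order $\vert\omega\vert t$ lives, which matches how the lemma is later used; the paper's version is shorter, avoids discrete-logarithm computations, and delegates the key existence fact to a standard textbook result.
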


\begin{proof}
	Since $x-\omega$ divides~$f$, the polynomial $x^t-\omega$ is a factor of~$f(x^t)$. We prove the assertion by showing that $x^t-\omega$ has a root of order~$\vert \omega \vert t$.
	\begin{enumerate}[$(i)$]
		\item We begin with the special case where $t$ is prime. Let~$\alpha$ be a root of~$x^t-~\omega$. (Hence, $\alpha^t = \omega$.) If $\vert \alpha \vert$ is divisible by~$t$, then $\vert \omega \vert t = \vert \alpha^t \vert \gcd(\vert \alpha\vert, t) = \vert \alpha \vert$, as needed.
		So, suppose that $t$ does not divide $\vert \alpha \vert$. By \cite[Theorem~$2.42(i)$]{MR1429394} there exists a $t$-th root of unity over~$\mathbb{F}_q$ of order~$t$, say~$\beta$.
		Then $(\alpha\beta)^t = \omega$, that is $\alpha\beta$ is a root of $x^t-\omega$. Moreover, since $\gcd(\vert \alpha \vert, t) = 1$ and $t = \vert\beta\vert$, the order of $\alpha\beta$ is divisible by~$t$.
	By what we have already proved, $\vert \omega \vert t = \vert \alpha\beta \vert$, as asserted.
		
		\item Let $t_1, \dots, t_\ell$ be (not necessarily distinct) primes such that $t = \prod_{i=1}^\ell t_i$.
		By part~$(i)$ of the current proof, the polynomial $x^{t_1}-\omega$ contains a root of order~$\vert \omega \vert t_1$, say~$\omega_1$.
		Applying~$(i)$ to the polynomial $x^{t_2}-\omega_1$ we see that $x^{t_2}- \omega_1$ contains a root of order $\vert \omega_1\vert t_2 = \vert \omega \vert t_1 t_2$.
		Since $x-\omega_1 \mid x^{t_1}-\omega$ we have $x^{t_2}-\omega_1 \mid x^{t_1t_2}-\omega$. It follows that $x^{t_1t_2}-\omega$ has a root of order~$\vert \omega \vert t_1t_2$.
		Repeatedly applying this procedure, we conclude that $x^t -\omega = x^{t_1 \cdots t_\ell}-\omega$ contains a root of order $\vert \omega \vert t_1 \cdots t_\ell = \vert \omega \vert t$.\qedhere
	\end{enumerate}%
\end{proof}

\subsection{Some more preliminaries}

	We conclude this section with a few more straightforward yet helpful lemmas. We start with a well-known result which we prove using Lemma~$\ref{l:ord:1}(a)$.

\begin{lemma}\label{l:more:1}
	Let $a,b,c \in \mathbb{N}$. Then  $\gcd(a^b-1, a^c-1) = a^{\gcd(b,c)}-1$.%
\end{lemma}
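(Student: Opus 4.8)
The plan is to prove the identity by establishing divisibility in both directions, after disposing of a trivial case. If $a = 1$ then $a^b-1 = a^c-1 = a^{\gcd(b,c)}-1 = 0$ and there is nothing to do, so assume $a \geq 2$ throughout; in particular $a^k-1$ is a positive integer for every $k \in \mathbb{N}$, so it suffices to show that $\gcd(a^b-1, a^c-1)$ and $a^{\gcd(b,c)}-1$ divide one another. Write $d = \gcd(b,c)$.

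For the first direction, recall the elementary fact that $a^k-1$ divides $a^\ell-1$ whenever $k \mid \ell$ (since $a^\ell-1 = (a^k-1)(a^{\ell-k} + a^{\ell-2k} + \dots + a^k + 1)$). As $d$ divides both $b$ and $c$, it follows that $a^d-1$ divides $a^b-1$ and $a^c-1$, and therefore divides their greatest common divisor.

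For the converse, set $r = \gcd(a^b-1, a^c-1)$. Since $r \mid a^b-1$, the integer $r$ is coprime to $a$, so $\ord{a}{r}$ is defined and Lemma~$\ref{l:ord:1}(a)$ applies. From $r \mid a^b-1$ and $r \mid a^c-1$ it gives $\ord{a}{r} \mid b$ and $\ord{a}{r} \mid c$, hence $\ord{a}{r} \mid \gcd(b,c) = d$. Applying Lemma~$\ref{l:ord:1}(a)$ in the other direction then yields $r \mid a^d-1$.

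Combining the two directions, $\gcd(a^b-1, a^c-1)$ and $a^d-1$ are positive integers that divide each other and so are equal, which is the assertion. I do not expect any real obstacle here; the only points needing a moment's attention are that $\ord{a}{r}$ is well-defined (guaranteed by $\gcd(a,r) = 1$) and the handling of the degenerate cases $a = 1$ above and, harmlessly, $r = 1$ (where $\ord{a}{1} = 1$ and $1 \mid a^d-1$ trivially).
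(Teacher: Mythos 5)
Your argument is correct and matches the paper's own proof essentially step for step: one direction via the factorisation $a^\ell-1 = (a^d-1)(a^{\ell-d}+\dots+a^d+1)$, the converse by applying Lemma~\ref{l:ord:1}(a) twice to $r=\gcd(a^b-1,a^c-1)$. Your extra remarks on the cases $a=1$ and $r=1$ and on the coprimality of $a$ and $r$ are harmless refinements the paper leaves implicit.
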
	

\begin{proof}
	Let $\ell = \gcd(a^b-1, a^c-1)$ and $k = \gcd(b,c)$.
	For $i \in \{b,c\}$ we have $a^i-1 = (a^k-1) (a^{i - k} + a^{i -2k} + \dots + a^k + 1)$ and, in particular, $a^k-1 \mid a^i-1$. Thus, $a^k-1 \mid \ell$.
	Conversely, since for $i \in \{b,c\}$ the integer $\ell$ divides $\mid a^i-1$, Lemma~$\ref{l:ord:1}(a)$ yields $\ord{a}{\ell} \mid i$. Thus, $\ord{a}{\ell} \mid k$, and then, applying Lemma~$\ref{l:ord:1}(a)$ one more time, we conclude that $\ell \mid a^k-1$.
\end{proof}

Recall that $\varphi: \mathbb{N} \rightarrow \mathbb{N}$ denotes Euler's totient function.

\begin{lemma}\label{l:more:2}
	Let $a, b \in \mathbb{N}$. The set $\{1, \dots, ab\}$ contains $a\varphi(b)$ elements which are coprime to $b$.
\end{lemma}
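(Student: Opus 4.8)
The plan is to exploit the fact that coprimality to $b$ depends only on the residue class modulo~$b$, and then to partition $\{1,\dots,ab\}$ into $a$ consecutive blocks of length~$b$.

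First I would write $\{1,\dots,ab\} = \bigcup_{k=0}^{a-1} B_k$, where $B_k = \{kb+1, kb+2, \dots, kb+b\}$, and observe that these $a$ blocks are pairwise disjoint, so it suffices to count, for each fixed $k$, the number of elements of $B_k$ coprime to~$b$. Next I would note that the map $B_k \to \{0,1,\dots,b-1\}$ sending $n \mapsto n - kb$ (equivalently, reduction modulo~$b$ composed with a shift) is a bijection, and that $\gcd(n,b) = \gcd(n \bmod b, b)$; hence $n \in B_k$ is coprime to~$b$ if and only if its image is one of the $\varphi(b)$ residues in $\{0,1,\dots,b-1\}$ coprime to~$b$. (Here $0$ is coprime to $b$ exactly when $b=1$, which is consistent with $\varphi(1)=1$, so the edge case $b=1$ is handled automatically; I would spell this out in one sentence.) Therefore each block $B_k$ contains exactly $\varphi(b)$ elements coprime to~$b$.

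Finally, summing over the $a$ disjoint blocks gives $a\varphi(b)$ elements of $\{1,\dots,ab\}$ coprime to~$b$, as asserted. I do not expect any real obstacle: the only point requiring a touch of care is the normalization of the residue system (using $\{0,\dots,b-1\}$ versus $\{1,\dots,b\}$) and confirming it does not affect the count, which it does not since shifting by a multiple of~$b$ preserves gcd with~$b$.
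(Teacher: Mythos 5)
Your proof is correct and is essentially the paper's argument: both count via the decomposition $\ell = kb + r$, using that coprimality to $b$ depends only on the residue $r$, so each of the $a$ blocks contributes exactly $\varphi(b)$ elements. The only cosmetic point is that $n \mapsto n-kb$ sends $B_k$ to $\{1,\dots,b\}$ rather than $\{0,\dots,b-1\}$, but as you note this shift does not affect the count, and your handling of the $b=1$ edge case matches the paper's separate treatment of it.
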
%

\begin{proof}
Observe that the assertion holds for $b = 1$. We thus assume that $b \geq 2$. An element $\ell \in \{1, \dots, ab\}$ is coprime to $b$ if and only if $\ell = sb +r$ where $s, r$ are integers satisfying $0 \leq s < a$, $1 \leq r < b$, and $\gcd(r,b) =1$. Hence, there are precisely $a\varphi(b)$ choices for~$\ell$.
\end{proof}%

\begin{lemma}\label{l:more:3}
	Let $G$ be a cyclic group and let $t$ be a divisor of~$\vert G\vert$. Then $G$ contains $\vert G \vert\varphi(t)/t$ elements~$g$ such that $\vert G \vert/\vert g \vert$ and $t$ are coprime.
\end{lemma}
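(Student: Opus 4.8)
Write $n = \vert G\vert$ and fix an isomorphism $G \cong \mathbb{Z}/n\mathbb{Z}$, so that the elements of $G$ correspond to the residues $a + n\mathbb{Z}$ with $a \in \{1, \dots, n\}$. The plan is to translate the condition ``$\vert G\vert / \vert g\vert$ and $t$ are coprime'' into a divisibility condition on the representative $a$, and then to count the admissible $a$ using Lemma~$\ref{l:more:2}$.

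First I would recall the standard fact that the element $a + n\mathbb{Z}$ of $\mathbb{Z}/n\mathbb{Z}$ has order $n / \gcd(a,n)$; hence $\vert G\vert / \vert g\vert = \gcd(a,n)$ for the corresponding $g \in G$. Therefore $g$ satisfies the required property precisely when $\gcd\bigl(\gcd(a,n),\, t\bigr) = 1$. Since $t$ divides $n$, we have $\gcd(n,t) = t$, and so $\gcd\bigl(\gcd(a,n), t\bigr) = \gcd\bigl(a, \gcd(n,t)\bigr) = \gcd(a,t)$. Thus the number of $g \in G$ with $\vert G\vert/\vert g\vert$ coprime to $t$ equals the number of $a \in \{1, \dots, n\}$ with $\gcd(a,t) = 1$.

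To finish, I would write $n = (n/t)\,t$, which is legitimate because $t \mid n$, and apply Lemma~$\ref{l:more:2}$ with parameters $n/t$ and $t$ in place of $a$ and $b$: the set $\{1, \dots, (n/t)\,t\} = \{1, \dots, n\}$ contains exactly $(n/t)\,\varphi(t)$ elements coprime to $t$. This count is $\vert G\vert \varphi(t)/t$, as asserted. (The case $t = 1$ is consistent: then every element qualifies and $\vert G\vert\varphi(1)/1 = \vert G\vert$, matching what Lemma~$\ref{l:more:2}$ gives for $b = 1$.)

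There is no serious obstacle here; the only points requiring a little care are the order formula for elements of a cyclic group (which is elementary), the simplification $\gcd(\gcd(a,n),t) = \gcd(a,t)$ using $t \mid n$, and matching the index set $\{1,\dots,n\}$ exactly with the hypothesis of Lemma~$\ref{l:more:2}$ via the factorization $n = (n/t)t$.
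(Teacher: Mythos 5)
Your proposal is correct and follows essentially the same route as the paper: identify $G$ with $\mathbb{Z}/n\mathbb{Z}$ (the paper writes elements as powers $h^\ell$ of a generator), observe that $\vert G\vert/\vert g\vert = \gcd(a,n)$ so the condition becomes $\gcd(a,t)=1$ using $t \mid n$, and then count via Lemma~\ref{l:more:2} with parameters $n/t$ and $t$. No gaps.
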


\begin{proof}
	Let $h$ be a generator of~$G$, whence the elements of $G$ are given by $h^\ell$, $\ell \in \{1, \dots, \vert G \vert\}$. 
	Since $\vert G \vert/\vert h^\ell\vert = \gcd(\vert G \vert, \ell)$, we see that $\vert G \vert/\vert h^\ell\vert$ is coprime to $t$ if and only if $\gcd(\vert G \vert, \ell)$ is coprime to $t$, which (recalling that $t$ is a divisor of $\vert G \vert$) is the case if and only if $\gcd(\ell, t)=1$.
	
	Thus, the number of elements $g$ in~$G$ satisfying $\gcd\bigl( \vert G \vert/\vert g \vert, t\bigr) = 1$ equals the number of integers in $\ell \in \{1, \dots, \vert G \vert\}$ such that $\gcd(\ell, t) =1$. Then the assertion holds by Lemma~$\ref{l:more:2}$ (applied to $a = \vert G \vert/t$ and $b = t$).
\end{proof}

A positive integer~$n$ is said to be \emph{square-free} if for all prime divisors $s$ of~$n$, $s^2$ does not divide $n$. Observe that $1$ is square-free.

\begin{lemma}\label{l:more:4}
	Let $m,t \in \mathbb{N}$. Let
		\[J = \biggl\{n \in \mathbb{N} \mid n \text{ divides } m, \, \gcd\biggl( \frac{q^m-1}{q^{m/n}-1}, t \biggr) =1\biggr\}.\]
	Then the following hold.
	\begin{enumerate}[$(a)$]
		\item If $j \in J$ and $j_0 \mid j$, then $j_0 \in J$. 
		\item Let $r$ be the product of all distinct primes in~$J$ if any exist, and let $r = 1$ else. Then
	$\{n \in \mathbb{N} \mid n \text{ divides } r\} = \{j \in J \mid j \text{ square-free}\}$.
	\end{enumerate}\vskip-.3cm
\end{lemma}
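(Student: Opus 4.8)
The plan is to treat the two parts separately: part~$(a)$ is a short divisibility observation, while part~$(b)$ reduces, via part~$(a)$ together with a closure property of $J$, to an easy induction on the number of prime factors. For part~$(a)$, suppose $j \in J$ and $j_0 \mid j$. Then $j_0 \mid j \mid m$, and since $j_0 \mid j$ we have $m/j \mid m/j_0 \mid m$; hence, by Lemma~\ref{l:more:1} (or simply because $q^a-1 \mid q^b-1$ whenever $a \mid b$), $q^{m/j}-1 \mid q^{m/j_0}-1 \mid q^m-1$. Therefore $\frac{q^m-1}{q^{m/j_0}-1}$ is an integer that divides $\frac{q^m-1}{q^{m/j}-1}$, so it inherits coprimality to $t$, and $j_0 \in J$.

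For part~$(b)$ the crucial ingredient is the following \emph{closure} claim: if $j_1,j_2 \in J$ with $\gcd(j_1,j_2)=1$ and $j_1j_2 \mid m$, then $j_1j_2 \in J$. I would prove this by establishing the divisibility
	\[ \frac{q^m-1}{q^{m/(j_1j_2)}-1} \;\Bigm|\; \frac{q^m-1}{q^{m/j_1}-1}\cdot\frac{q^m-1}{q^{m/j_2}-1}. \]
Since $\gcd(j_1,j_2)=1$ and $j_1j_2\mid m$, the positive integers $m/j_1,m/j_2$ satisfy $\gcd(m/j_1,m/j_2)=m/(j_1j_2)$ and $\lcm(m/j_1,m/j_2)=m$; by Lemma~\ref{l:more:1}, $\gcd(q^{m/j_1}-1,q^{m/j_2}-1)=q^{m/(j_1j_2)}-1$, so that $\lcm(q^{m/j_1}-1,q^{m/j_2}-1) = (q^{m/j_1}-1)(q^{m/j_2}-1)/(q^{m/(j_1j_2)}-1)$, and this least common multiple divides $q^m-1$ because both $q^{m/j_1}-1$ and $q^{m/j_2}-1$ do. Rearranging the resulting relation $(q^{m/j_1}-1)(q^{m/j_2}-1)\mid (q^{m/(j_1j_2)}-1)(q^m-1)$ gives the displayed divisibility; its left-hand side is then coprime to $t$ because the right-hand side is (each factor being coprime to $t$ as $j_1,j_2\in J$), and together with $j_1j_2\mid m$ this yields $j_1j_2\in J$.

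Granting the closure claim, part~$(b)$ follows quickly. For the inclusion ``$\supseteq$'', a square-free $j\in J$ factors as a product of distinct primes, each of which lies in $J$ by part~$(a)$ and hence divides $r$; as they are distinct, $j\mid r$. For ``$\subseteq$'', a divisor $n$ of $r$ is a product of distinct primes of $J$ (so in particular $n\mid m$, since every prime of $J$ divides $m$ and $r\mid m$), and one shows $n\in J$ by induction on the number $k$ of these primes: the base case $k=0$ is $1\in J$, which is clear since $\frac{q^m-1}{q^{m/1}-1}=1$, and the inductive step removes one prime $p$ and applies the closure claim to $p$ and the product of the remaining primes (which lies in $J$ by the induction hypothesis).

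I expect the main obstacle to be the closure claim: it is the only place where the multiplicative structure of $q^m-1$ genuinely enters, and the key insight is to route Lemma~\ref{l:more:1} through the identity $\lcm(q^{m/j_1}-1,q^{m/j_2}-1)\mid q^m-1$, which converts cleanly into the desired product divisibility of the relevant quotients. (An equivalent route would be to expand via $q^N-1=\prod_{d\mid N}\Phi_d(q)$ and compare which cyclotomic factors occur, but the $\gcd$/$\lcm$ argument keeps everything within the elementary toolkit already developed in the paper.)
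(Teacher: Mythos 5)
Your proposal is correct. Part~$(a)$ is essentially the paper's argument (the paper phrases it contrapositively, you argue directly, but both just use $q^{m/j}-1 \mid q^{m/j_0}-1$ and the resulting divisibility of the quotients). For part~$(b)$ you take a genuinely different route. The paper handles a divisor $n=r_1\cdots r_k$ of $r$ in a single step: it introduces the quantity $(q^m-1)_t$, the product of the $s$-parts of $q^m-1$ over the primes $s$ dividing $t$, observes that $r_i\in J$ forces $(q^m-1)_t \mid q^{m/r_i}-1$, and then applies Lemma~\ref{l:more:1} to the $k$-term gcd to get $(q^m-1)_t \mid \gcd\bigl(q^{m/r_1}-1,\dots,q^{m/r_k}-1\bigr)=q^{m/n}-1$, which immediately gives $n\in J$ with no induction. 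You instead isolate a pairwise closure claim (two coprime members of $J$ whose product divides $m$ have their product in $J$), proved via $\lcm\bigl(q^{m/j_1}-1,q^{m/j_2}-1\bigr)\mid q^m-1$ and the rearrangement into the product divisibility of the quotients, and then induct on the number of prime factors; your computation $\gcd(m/j_1,m/j_2)=m/(j_1j_2)$ and the rearrangement $BC\mid DA \Rightarrow A/D \mid (A/B)(A/C)$ are both valid. The trade-off: the paper's use of the $t$-part gives a shorter, induction-free argument that treats all primes simultaneously, while your closure claim is a slightly more general structural statement about $J$ and keeps every step a two-term gcd/lcm manipulation; both rest entirely on Lemma~\ref{l:more:1} and stay within the paper's elementary toolkit.
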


\begin{proof}
	\begin{enumerate}[$(a)$]
		\item Let $j \in J$ and let $j_0$ be a divisor of~$j$. Seeking a contradiction, assume that $j_0 \notin J$. Then
		\[\gcd\left( \frac{q^m-1}{q^{m/j_0}-1}, t \right) \neq 1.\]
	Since $m/j$ divides $m/j_0$, by Lemma~$\ref{l:more:1}$ we have $q^{m/j}-1 \mid q^{m/j_0}-1$. Then $\gcd\bigl( (q^m-1)(q^{m/j_0}-1)^{-1}, t \bigr) \mid \gcd\bigl( (q^m-1)(q^{m/j}-1)^{-1}, t \bigr)$. In particular, $\gcd\bigl( (q^m-1)(q^{m/j}-1)^{-1}, t \bigr) \neq 1$. This is not true since $j\in J$.
	
		\item The assertion trivially holds for $J = \{1\}$. So suppose that $J \neq \{1\}$. Then, by part~$(a)$ of the current lemma, $J$ contains primes.
		Let $r_1, \dots, r_\ell$ be (all) the distinct primes in $J$, whence  $r = \prod_{i=1}^\ell r_i$. 	Since $1 \in J$, in order to prove the assertion it suffices to show that
		\[ \{n \in \mathbb{N} \mid n \geq 2, n \text{ divides } r\} = \{ j \in J \mid j \geq 2, j \text{ square-free} \}.\]
		Consider a divisor $n \geq 2$ of~$r$. We may assume that $n = \prod_{i=1}^k r_i$ for some $k \leq \ell$.
	Since $r_1, \dots, r_k$ are pairwise distinct prime divisors of~$m$, their product $n$ is a square-free divisor of~$m$.
	Let $(q^m-1)_t$ be the product of the $s$-parts of~$q^m-1$ for all prime divisors $s$ of~$t$. Observe that, for all $i \in \{1, \dots, k\}$, the condition $\gcd\bigl( (q^m-1)(q^{m/r_i}-1)^{-1}, t \bigr) =1$ implies that $(q^m-1)_t \mid \! q^{m/r_i}-1$. Thus, $(q^m-1)_t$ divides $\gcd\bigl( q^{m/r_1}-1, \dots, q^{m/r_k}-1 \bigr)$, which according to Lemma~$\ref{l:more:1}$ equals $q^{\gcd(m/r_1, \dots, m/r_k)} -1 = q^{m/n}-1$. Then $\gcd\bigl( (q^m-1) (q^{m/n}-1)^{-1}, t \bigr) = 1 $, whence $n \in J$.
	
	Conversely, consider a square-free element $j \geq 2$ of~$J$. By part~$(a)$ of the current lemma, each prime divisor of~$j$ lies in~$J$. Hence, $j\mid r$.\qedhere
	\end{enumerate}%
\end{proof}

\section{Almost \emph{t}-hyper-irreducible polynomials}\label{s:proofs}

Let $t \in \mathbb{N}$. Recall that a polynomial $f \in \mathbb{F}_q[x]$ is said to $t$-hyper-irreducible if $f(x^t)$ is irreducible. We refer to $f$ as almost $t$-hyper-irreducible if $f$ is irreducible and $f(x^t)$ contains an irreducible factor of degree strictly bigger than $\deg(f)t/2 = \deg(f(x^t))/2$. As we point out in the introduction, any $t$-hyper-irreducible polynomial is irreducible, which is why $t$-hyper-irreducible polynomials are almost $t$-hyper-irreducible.

This section is devoted to the occurrence of almost $t$-hyper-irreducible polynomials in~$\mathbb{F}_q[x]$. The proofs of Theorems~$\ref{t:existence:1}$, $\ref{t:number1:1}$, $\ref{t:number2:1}$ are given in Subsections~$\ref{ss:existence}$, $\ref{ss:number1}$, and~$\ref{ss:number2}$ respectively.

\subsection{Existence of almost \emph{t}-hyper-irreducible polynomials}\label{ss:existence}

\begin{proposition}\label{p:existence:2}
	Let $m,t,e$ be positive integers satisfying $e > mt/2$. Let $f \neq x$ be an irreducible polynomial of degree~$m$ in $\mathbb{F}_q[x]$ and let $\omega$ be a root of~$f$. Then $f(x^t)$ contains an irreducible (over~$\mathbb{F}_q$) factor of degree~$e$ if and only if $\gcd(q,t) = 1$ and $\ord{q}{\vert \omega \vert t} =e$.
\end{proposition}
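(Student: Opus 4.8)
The plan is to analyze the factorization of $f(x^t)$ by tracking the orders of roots, using the correspondence between irreducible factors and orbits under the Frobenius map $\omega \mapsto \omega^q$.

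First I would establish the forward direction. Suppose $f(x^t)$ has an irreducible factor $g$ of degree $e$. Since $f \neq x$ is irreducible, $0$ is not a root of $f$, hence not a root of $f(x^t)$, so $g \neq x$ and every root of $g$ is nonzero. If $\gcd(q,t) \neq 1$, then $t = p^k t'$ with $p = \mathrm{char}(\mathbb{F}_q)$ and $k \geq 1$, and $f(x^t) = f(x^{p^k t'}) = f((x^{t'})^{p^k})$; since raising to the $p$-th power is the Frobenius on $\mathbb{F}_q$ and $f$ has coefficients in $\mathbb{F}_q$, we get $f(y^{p^k}) = \bigl(\tilde f(y)\bigr)^{p^k}$ for a suitable polynomial $\tilde f$ (replace each coefficient $c$ by $c^{1/p^k}$), so $f(x^t)$ is a $p^k$-th power; its irreducible factors then occur with multiplicity divisible by $p^k > 1$, and comparing degrees forces $e \cdot p^k \le \deg f(x^t) = mt = mp^kt'$, i.e.\ $e \le mt'= mt/p^k \le mt/2$, contradicting $e > mt/2$. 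Hence $\gcd(q,t) = 1$. Now let $\beta$ be a root of $g$. Then $\beta^t$ is a root of $f$, so by Lemma~\ref{l:poly:1}(d) all roots of $f$ have the same order, and $\lvert \beta^t \rvert = \lvert \omega \rvert$. Since $\gcd(\lvert \beta \rvert, q) = 1$ we have $\lvert \beta^t \rvert = \lvert \beta \rvert / \gcd(\lvert \beta \rvert, t)$. The key point is to show $\lvert \beta \rvert$ is divisible by $t$, so that $\lvert \beta \rvert = \lvert \omega \rvert t$; then $e = \deg(g) = \ord{q}{\lvert \beta \rvert} = \ord{q}{\lvert \omega \rvert t}$ by Lemma~\ref{l:poly:3}. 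To get $t \mid \lvert\beta\rvert$: by Lemma~\ref{l:ord:1}(f), $\ord{q}{\lvert\omega\rvert t} \le \ord{q}{\lvert\omega\rvert} \cdot t = mt$ actually we need a different bound --- instead use that $\lvert\beta\rvert \mid \lvert\omega\rvert t$ and $\ord{q}{\lvert\beta\rvert} = e > mt/2 \ge (\deg f) \cdot t /2$; if $d := \gcd(\lvert\beta\rvert,t) < t$ then writing $t = d s$ with $s \ge 2$, one has $\lvert\beta\rvert \mid \lvert\omega\rvert d = \lvert\omega\rvert t/s$, and $\ord{q}{\lvert\omega\rvert t/s} \le \ord{q}{\lvert\omega\rvert}\cdot (t/s) = mt/s \le mt/2$ by Lemma~\ref{l:ord:1}(f) applied with $r = \lvert\omega\rvert$, contradicting $e > mt/2$. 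So $d = t$, giving $\lvert\beta\rvert = \lvert\omega\rvert t$ and $e = \ord{q}{\lvert\omega\rvert t}$.

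Next I would prove the converse. Assume $\gcd(q,t) = 1$ and $\ord{q}{\lvert\omega\rvert t} = e$. By Lemma~\ref{l:poly:6}, since $t$ is coprime to $q$, the polynomial $f(x^t)$ has a root $\gamma$ of order $\lvert\omega\rvert t$. Let $g$ be the minimal polynomial of $\gamma$ over $\mathbb{F}_q$, which is an irreducible factor of $f(x^t)$ (as $\gamma$ is a root of $f(x^t) \in \mathbb{F}_q[x]$, which vanishes at $\gamma$). Since $\gamma$ is a nonzero root of the irreducible polynomial $g$, Lemma~\ref{l:poly:3} gives $\deg(g) = \ord{q}{\lvert\gamma\rvert} = \ord{q}{\lvert\omega\rvert t} = e$. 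Hence $f(x^t)$ has an irreducible factor of degree $e$, as required.

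I expect the main obstacle to be the divisibility argument $t \mid \lvert\beta\rvert$ in the forward direction: one must carefully leverage the hypothesis $e > mt/2$ together with Lemma~\ref{l:ord:1}(f) to rule out every proper divisor $d$ of $t$ as the value of $\gcd(\lvert\beta\rvert, t)$. (Note this is exactly the place where the "large factor'' hypothesis $e > mt/2$ is essential; without it $f(x^t)$ could have a small irreducible factor whose root order is not a multiple of $t$.) The rest is a routine translation between polynomial irreducibility and orders of roots via Lemmas~\ref{l:poly:1}, \ref{l:poly:3}, and~\ref{l:poly:6}, plus the characteristic-$p$ collapse argument for the coprimality of $q$ and $t$.
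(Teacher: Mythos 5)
Your proof is correct and follows essentially the same route as the paper's: the characteristic-$p$ power argument for $\gcd(q,t)=1$, the relation $\lvert\beta\rvert=\lvert\omega\rvert\gcd(\lvert\beta\rvert,t)$ combined with Lemma~\ref{l:ord:1}(f) and the hypothesis $e>mt/2$ to force $\gcd(\lvert\beta\rvert,t)=t$, and the converse via Lemmas~\ref{l:poly:6} and~\ref{l:poly:3}. The only differences are cosmetic (a $p^k$-th power instead of a $p$-th power, and ruling out proper divisors by contradiction rather than noting the gcd exceeds $t/2$).
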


\begin{proof}	
	First, assume that $f(x^t)$ has an irreducible factor $f_0 \in \mathbb{F}_q[x]$ with $\deg(f_0) =~e$.
	If $\gcd(q,t) \neq 1$ then the characteristic~$p$ of~$\mathbb{F}_q$ divides~$t$. In this case, writing $f(x) = \sum_{i=0}^m \alpha_i x^i$, we get
		\[f(x^t) = \sum_{i=0}^m \alpha_i x^{ti} = \left(\sum_{i=0}^m \alpha_i^{q/p} x^{ti/p}\right)^p,\]
	which yields the contradiction $e \leq mt/p \leq mt/2$. It follows that $t$ and $q$ are coprime. Let $\xi$ be a root of~$f_0$. Note that $\xi \neq 0$. By Lemma~$\ref{l:poly:3}$ we have
		\begin{align}
			\ord{q}{\vert \xi \vert} = e.\label{eq:existence:2:1}
		\end{align}
	Recalling that $f_0 \mid f(x^t)$ we see that $\xi^t$ is a root of $f$. Since $f$ is irreducible, by Lemma~$\ref{l:poly:1}(d)$ we have $\vert \xi^t \vert = \vert \omega \vert$. Then
		\begin{align}
			\vert \xi \vert = \vert \omega \vert \gcd(t, \vert \xi \vert).\label{eq:existence:2:2}
		\end{align}
	By Lemma~$\ref{l:ord:1}(f)$ we have
		\begin{align}
			\ord{q}{\vert \omega \vert \gcd(t, \vert \xi \vert)} \leq \ord{q}{\vert \omega\vert} \gcd(t,\vert \xi \vert).\label{eq:existence:2:3}
		\end{align}
	Now, by~$\eqref{eq:existence:2:1}$ and $\eqref{eq:existence:2:2}$ the left hand-side of $\eqref{eq:existence:2:3}$ is equal to $e > mt/2$. Further, by Lemma~$\ref{l:poly:3}$ the right hand-side of $\eqref{eq:existence:2:3}$ equals $m\gcd(t,\vert \xi \vert)$. This reveals that $\gcd(t,\vert \xi\vert) = t$.
	Then by $\eqref{eq:existence:2:1}$ and $\eqref{eq:existence:2:2}$ we get $\ord{q}{\vert \omega \vert t} = e$.
	
	Conversely, assume that $\gcd(q,t) = 1$ and $\ord{q}{\vert \omega \vert t} =e$. According to Lemma~$\ref{l:poly:6}$ the polynomial $f(x^t)$ contains a root $\xi$ of order~$\vert \omega \vert t$. Let $f_0$ be an irreducible (over~$\mathbb{F}_q$) factor of~$f(x^t)$ which contains $\xi$ as a root. Then by Lemma~$\ref{l:poly:3}$ we have $\deg(f_0) = \ord{q}{\vert \xi \vert} = \ord{q}{\vert \omega \vert t} = e$.
\end{proof}

We are now in the position to prove Theorem~$\ref{t:existence:1}$.

\begin{proof}[Proof of Theorem~$\ref{t:existence:1}$.]
	Let $f \in \mathbb{F}_q[x]$ be irreducible such that $\deg(f) = m$ and $f(x^t) \in \mathbb{F}_q[x]$ has an irreducible factor of degree~$e$. If $t = 1$ then $\gcd(t,q) = 1$ and $e = m = \ord{q}{(q^m-1)t}$. So suppose that $t \geq 2$. (If $0$ is a root of $f$, then recalling that $f$ is irreducible we have $f = x$. But, since $t \geq 2$, the irreducible factors of $x^t$ have degree~$1 \leq\deg(f)t/2$). Thus, $f(0) \neq 0$. Now, let $\omega$ be a root of~$f$. Then Proposition~$\ref{p:existence:2}$ reveals that
		\[\gcd(q,t) = 1, \quad e = \ord{q}{\vert \omega\vert t}.\]
	Since $\omega \in \mathbb{F}_{q^m}^*$ by Lemma~$\ref{l:poly:1}(c)$, $\vert \omega \vert t$ divides~$(q^m-1)t$, which is why by Lemma~$\ref{l:ord:1}(b)$ we have $\ord{q}{\vert \omega \vert t} \mid \ord{q}{(q^m-1)t}$, that is (recalling that $ e = \ord{q}{\vert \omega\vert t}$),
		\[ e \mid \ord{q}{(q^m-1)t}. \]
	Using Lemma~$\ref{l:ord:1}(f)$ (by which $\ord{q}{(q^m-1)t}\leq\ord{q}{q^m-1}t = mt$) and the assumption $e>mt/2$, it follows that $\ord{q}{(q^m-1)t}=e$.

	Conversely, suppose that $q,t$ are coprime and $\ord{q}{(q^m-1)t} = e$. Let~$\omega$ be a primitive element of~$\mathbb{F}_{q^m}^*$ (whence $\vert \omega\vert = q^m-1$) and let $f$ be the minimal polynomial of~$\omega$ over~$\mathbb{F}_q$ (whence $f \in \mathbb{F}_q[x]$ is irreducible with $\deg(f) = m$). By Proposition~$\ref{p:existence:2}$ the polynomial~$f(x^t)$ contains an irreducible (over~$\mathbb{F}_q$) factor of degree~$e$.%
\end{proof}

The following corollary is obtained by combining Propositions~$\ref{p:ord:3}$ with Proposition~$\ref{p:existence:2}$, and Theorem~$\ref{t:existence:1}$ respectively. 
The characterisation of $t$-hyper-irreducible polynomials presented in part~$(a)$ of Corollary~$\ref{c:existence:3}$ generalises Lemma~$\ref{l:poly:3}$ (and we can retrieve the statement of Lemma~$\ref{l:poly:3}$ by setting $t = 1$). It also generalises \cite[Theorem~$3.75$]{MR1429394} which covers the case $m = 1$.

\begin{corollary}\label{c:existence:3}
	\begin{enumerate}[$(a)$]
		\item Let $f\in \mathbb{F}_q[x]$ contain a non-zero root~$\omega$, let $t \in \mathbb{N}$, and let $m = \deg(f)$. The following are equivalent.
			\begin{enumerate}[$(i)$]
				\item The polynomial~$f$ is $t$-hyper-irreducible.
				\item The integers $t,q$ are coprime and $\ord{q}{\vert \omega \vert t} = m t$.
				\item We have $\ord{q}{\vert \omega \vert} = m$, the integer~$\gcd(4,t)$ divides~$\vert \omega \vert$, and each prime divisor of~$t$ divides~$\vert \omega \vert$ but not $(q^m-1)/\vert \omega \vert$.
			\end{enumerate}
				
		\item Let $m,t \in \mathbb{N}$. The following are equivalent.
	\begin{enumerate}[$(i)$]
		\item There exists a $t$-hyper-irreducible polynomial of degree~$m$ in~$\mathbb{F}_q[x]$.
		\item The integers $t,q$ are coprime and $\ord{q}{(q^m-1)t} = mt$.
		\item Writing $t_1, \dots, t_\ell$ for (all) the distinct odd prime divisors of~$t$, we have $\gcd(t,4) \prod_{i=1}^\ell t_i \mid (q^m-1)$.
	\end{enumerate}
	\end{enumerate}
\end{corollary}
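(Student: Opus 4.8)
The plan is to assemble both parts from the preliminary results: part~$(a)$ from Proposition~$\ref{p:existence:2}$ together with Proposition~$\ref{p:ord:3}$ applied with $r=\vert\omega\vert$, and part~$(b)$ from Theorem~$\ref{t:existence:1}$ together with Proposition~$\ref{p:ord:3}$ applied with $r=q^m-1$. Throughout one uses that a non-zero root lies in some $\mathbb{F}_{q^k}^*$, so that $\vert\omega\vert$, and likewise $q^m-1$, is coprime to~$q$ and the cited propositions apply with $a=q\geq 2$.

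For part~$(a)$ I would first prove $(i)\Leftrightarrow(ii)$. A $t$-hyper-irreducible $f$ is irreducible (a non-trivial factorisation of $f$ would yield one of $f(x^t)$), and $\omega\neq 0$ forces $f\neq x$; since $\deg(f(x^t))=mt$, the conditions ``$f(x^t)$ irreducible'' and ``$f(x^t)$ has an irreducible factor of degree $mt$'' coincide, so $(i)\Rightarrow(ii)$ is Proposition~$\ref{p:existence:2}$ with $e=mt>mt/2$. For the converse one must recover irreducibility of~$f$ from~$(ii)$ alone: if $g$ denotes the minimal polynomial of~$\omega$, then Lemma~$\ref{l:poly:3}$ gives $\deg g=\ord{q}{\vert\omega\vert}\leq\deg f=m$, whereas $\ord{q}{\vert\omega\vert t}=mt$ and Lemma~$\ref{l:ord:1}(f)$ give $m\leq\ord{q}{\vert\omega\vert}$; hence $\ord{q}{\vert\omega\vert}=m=\deg f$, so $f$ is irreducible by Lemma~$\ref{l:poly:3}$ and Proposition~$\ref{p:existence:2}$ applies (again with $e=mt$). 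Since moreover both $(ii)$ and $(iii)$ entail $\ord{q}{\vert\omega\vert}=m$ (the former by the argument just given, the latter by inspection), and this is precisely the integer denoted~$m$ in Proposition~$\ref{p:ord:3}$, applying that proposition with $r=\vert\omega\vert$ yields $(ii)\Leftrightarrow(iii)$, the clause $\gcd(q,\vert\omega\vert t)=1$ there translating into $\gcd(q,t)=1$.

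For part~$(b)$, the equivalence $(i)\Leftrightarrow(ii)$ is Theorem~$\ref{t:existence:1}$ read with $e=mt$, using once more that the existence of a $t$-hyper-irreducible polynomial of degree~$m$ is the same as the existence of an irreducible $f$ of degree~$m$ with $f(x^t)$ possessing an irreducible factor of degree~$mt$. For $(ii)\Leftrightarrow(iii)$ I would invoke Proposition~$\ref{p:ord:3}$ with $r=q^m-1$: here Lemma~$\ref{l:more:1}$ readily yields $\ord{q}{q^m-1}=m$ and $(q^m-1)/(q^m-1)=1$, so condition~$(b)$ of that proposition becomes ``every prime divisor of~$t$ divides $q^m-1$, and $\gcd(4,t)\mid q^m-1$'' while its condition~$(a)$ reads ``$\gcd(q,t)=1$ and $\ord{q}{(q^m-1)t}=mt$'', i.e.\ exactly~$(ii)$. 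It then remains to repackage the first statement as the single divisibility in~$(iii)$: the odd prime divisors of~$t$ being $t_1,\dots,t_\ell$, their all dividing $q^m-1$ is equivalent to $\prod_{i=1}^\ell t_i\mid q^m-1$; the clause $\gcd(4,t)\mid q^m-1$ already forces $2\mid q^m-1$ whenever $2\mid t$; and since $\gcd(4,t)$ is a power of~$2$ while $\prod_{i=1}^\ell t_i$ is odd, these two divisibilities merge into $\gcd(4,t)\prod_{i=1}^\ell t_i\mid q^m-1$.

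I expect the only step that is not pure bookkeeping to be the recovery, inside part~$(a)$, of the identity $\ord{q}{\vert\omega\vert}=\deg f$ (equivalently, the irreducibility of~$f$) from condition~$(ii)$, since only there is an irreducible polynomial not already handed to us by hypothesis; everything else amounts to unwinding the definition of $t$-hyper-irreducibility and to the elementary fact that a $2$-power and an odd integer impose independent divisibility constraints.
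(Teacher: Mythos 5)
Your proposal is correct and follows essentially the same route as the paper: part~$(a)$ is assembled from Proposition~$\ref{p:existence:2}$ and Proposition~$\ref{p:ord:3}$ (with $r=\vert\omega\vert$), with the key step of recovering $\ord{q}{\vert\omega\vert}=m=\deg f$ from condition~$(ii)$ via Lemma~$\ref{l:poly:3}$ and Lemma~$\ref{l:ord:1}(f)$ exactly as in the paper, and part~$(b)$ from Theorem~$\ref{t:existence:1}$ and Proposition~$\ref{p:ord:3}$ (with $r=q^m-1$). The only differences are organisational (you prove $(i)\Leftrightarrow(ii)$ and $(ii)\Leftrightarrow(iii)$ rather than the paper's arrangement of implications) and that you spell out the bookkeeping merging the divisibility conditions into $\gcd(4,t)\prod_{i=1}^\ell t_i \mid q^m-1$, which the paper leaves implicit.
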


\begin{proof}
	\begin{enumerate}[$(a)$]
		\item If condition~$(i)$ holds, that is if $f$ is $t$-hyper-irreducible, then $f$ is irreducible, and by Proposition~$\ref{p:existence:2}$ condition~$(ii)$ follows. By Proposition~$\ref{p:ord:3}$ condition~$(iii)$ implies $(ii)$.
		
		It remains to show that $(ii)$ entails both~$(i)$ and~$(iii)$. So suppose that $\gcd(q,t) = 1$ and $\ord{q}{\vert \omega \vert t} = mt$. Since (as we may deduce from Lemma~$\ref{l:poly:3}$) $\ord{q}{\vert \omega \vert} \leq \deg(f) = m$, using Lemmas~$\ref{l:ord:1}(f), \ref{l:poly:3}$ we see that $\ord{q}{\vert \omega \vert} = m$. Then by Proposition~$\ref{p:ord:3}$ (applied to $a = q$ and $r = \vert \omega \vert$) condition~$(iii)$ holds. Moreover, by Lemma~$\ref{l:poly:3}$ the polynomial~$f$ is irreducible, which combined with Proposition~$\ref{p:existence:2}$ shows that $(i)$ is satisfied.
	
	\item By Theorem~$\ref{t:existence:1}$ conditions~$(i)$ and~$(ii)$ are equivalent. The equivalence of conditions~$(ii)$ and~$(iii)$ holds by Proposition~$\ref{p:ord:3}$ (applied to $a = q$ and $r = q^m-1$). \qedhere
	\end{enumerate}
\end{proof}

We conclude this subsection verifying the existence of (almost) $t$-hyper-irreducible polynomials for some specific values for $q, m$, and $t$. 

\begin{example}\label{e:existence:4}
	\begin{enumerate}[$(a)$]
		\item Let $q = 5$, $m =5$, $t = 99$. Then $\gcd(t,q) = 1$ and (as we may calculate by hand or in \textsf{GAP}\cite{GAP4} by calling \texttt{OrderMod(5,(5\^{}5-1)*99)} we have $\ord{5}{(5^5-1)99} = 330 > mt/2$.
	By Theorem~$\ref{t:existence:1}$ there are irreducible polynomials~$f \in \mathbb{F}_5[x]$ of degree~$5$ such that $f(x^{99})$ has an irreducible (over~$\mathbb{F}_5$) factor of degree~$330$. 
	
		\item According to Corollary~$\ref{c:existence:3}(b)$ there exists a $100$-hyper-irreducible polynomial of degree~$10$ over~$\mathbb{F}_q$ if and only if $20 \mid q^{10}-1$. By Euler's totient theorem we have $20 \mid q^{\varphi(20)}-1 = q^8-1$. Hence, by Lemma~$\ref{l:more:1}$, $20\mid q^{10}-1$ if and only if $20$ divides $q^{\gcd(8,10)}-1 = (q+1)(q-1)$. We conclude that $\mathbb{F}_q[x]$ contains $100$-hyper-irreducible polynomials of degree~$10$ if and only if $q \equiv \pm 1 \mod{10}$.
		
		\item Suppose that $q$ is odd. Then $2 \mid q^m-1$ for all $m\in \mathbb{N}$ and, by Corollary~$\ref{c:existence:3}(b)$, the polynomial ring $\mathbb{F}_q[x]$ contains $2$-hyper-irreducible polynomials of any (positive) degree.
	\end{enumerate}
\end{example}

\subsection{Counting monic \emph{t}-hyper-irreducible polynomials}\label{ss:number1}

Recall (from Definition~$\ref{d:num t-hyper-irred}$) that $N_q^*(m,t)$ is the number of all monic $t$-hyper-irreducible polynomials~$f \neq x$ of degree~$m$ in~$\mathbb{F}_q[x]$. Recall further the definition of the Moebius function~$\mu:\mathbb{N} \rightarrow \{-1,0,1\}$ given in~$\eqref{eq:moebius}$. 
 
\begin{proof}[Proof of Theorem~$\ref{t:number1:1}(a)$]
The proof follows the approach taken in~\cite{chebolu} to derive a formula for the number of all monic, irreducible polynomials of a given degree over~$\mathbb{F}_q$.

	Suppose that $N_q^*(m,t) \neq 0$.
	Let $\widehat{f} \in \mathbb{F}_q[x]$ be the product of all monic, $t$-hyper-irreducible polynomials~$f\neq x$ of degree~$m$ over~$\mathbb{F}_q$, and let~$R$ be the set of all roots of~$\widehat{f}$. (Recall that $t$-hyper-irreducible polynomials are irreducible.) By Lemma~$\ref{l:poly:1}(a)(e)$ each irreducible factor of~$\widehat{f}$ has $m$ distinct roots, and no two distinct irreducible factors of~$\widehat{f}$ share a root. Hence,
		\begin{align}
			N_q^*(m,t) =  \frac{\vert R \vert}{m}. \label{eq:number1:1:1}
		\end{align}
	Let $t_0$ be the product of~$\gcd(4,t)$ and all distinct odd prime divisors of~$t$. Since $t$ and $t_0$ have the same prime divisors (possibly with different multiplicities), using the product formula for Euler's totient function we obtain
		\begin{align}
			\frac{\varphi(t_0)}{t_0} = \frac{\varphi(t)}{t}.\label{eq:number1:1:2}
		\end{align}
	By Lemma~$\ref{l:poly:1}(d)$, $R \subseteq \mathbb{F}_{q^m}^*$. Then Corollary~$\ref{c:existence:3}(a)$ yields
		\[R = \biggl\{ \omega \in \mathbb{F}_{q^m}^* \mid \ord{q}{\vert \omega \vert}=m, \;t_0 \text{ divides } \vert \omega \vert, \; \gcd\biggl( \frac{q^m-1}{\vert\omega\vert}, t_0 \biggr) =1 \biggr\}. \]
	Since $N_q^*(m,t) \neq 0$, by Corollary~$\ref{c:existence:3}(b)$ the integer $t_0$ divides $q^m-1$. This shows that any element $\omega \in \mathbb{F}_{q^m}^*$ satisfying $\gcd\bigl( (q^m-1)\vert \omega \vert^{-1}, t_0 \bigr) = 1$ has order divisible by $t_0$. Hence, 
		\[R = \biggl\{ \omega \in \mathbb{F}_{q^m}^* \mid \ord{q}{\vert \omega \vert} = m,  \;\gcd\biggl(\frac{q^m-1}{\vert\omega\vert}, t_0 \biggr) =1 \biggr\}.\]
	By Lemma~$\ref{l:poly:2}$, for $\omega \in \mathbb{F}_{q^m}^*$ the condition $\ord{q}{\vert \omega \vert} = m$ is equivalent to saying that $\omega$ does not lie in any maximal subfield of~$\mathbb{F}_{q^m}$ containing~$\mathbb{F}_q$. Such maximal subfields have order $q^{m/j}$ where $j$ is a prime dividing~$m$.~Thus,
	\begin{align}
		R = \biggl\{ \omega \in \mathbb{F}_{q^m}^* \mid \gcd\biggl( \frac{q^m-1}{\vert\omega\vert}, t_0 \biggr) =1 \biggr\} \setminus A,\label{eq:number1:1:3}
	\end{align}
	where
		\[A = \bigcup_{\substack{j \mid m, \\ j \text{ prime}}} \hspace*{-.1cm} \biggl\{ \omega \in \mathbb{F}_{q^{m/j}}^* \mid \gcd\biggl( \frac{q^m-1}{\vert\omega\vert}, t_0 \biggr) =1 \biggr\}.  \]
	If $j$ is a prime divisor of $m$ which is not an element of~$J$ (as defined in the assumption), then $\gcd\bigl( (q^m-1)\vert \omega \vert^{-1}, t_0 \bigr) \neq 1$ for all $\omega \in \mathbb{F}_{q^{m/j}}^*$, in which case the set $\bigl\{ \omega \in \mathbb{F}_{q^{m/j}}^* \mid \gcd\bigl( (q^m-1)\vert\omega\vert^{-1}, t_0 \bigr) =1 \bigr\}$ is empty. This shows that
		\[A = \bigcup_{\substack{j \in J, \\ j \text{ prime}}} \hspace*{-.1cm} \biggl\{ \omega \in \mathbb{F}_{q^{m/j}}^* \mid \gcd\biggl( \frac{q^m-1}{\vert\omega\vert}, t_0 \biggr) =1 \biggr\}.  \]
	If $j_1, \dots, j_\ell \in J$ are distinct primes and $s=\prod_{i=1}^\ell j_i$ then the intersection $\bigcap_{i=1}^\ell \mathbb{F}_{q^{m/j_i}}^*$ is equal to $\mathbb{F}_{q^{m/s}}^*$. Using the inclusion-exclusion principle, it follows that
		\begin{align*}
			\vert A \vert
				& = \sum_{\substack{j \in J,\\ j\text{ prime}}} \hspace*{-.1cm}  \biggl\vert \biggl\{\omega \in \mathbb{F}_{q^{m/{j}}}^* \mid \gcd\biggl( \frac{q^m-1}{\vert\omega\vert}, t_0 \biggr) =1 \biggr\} \biggr\vert \\
				& \quad - \hspace*{-.5cm} \sum_{\substack{j_1,j_2 \in J,\\ j_1,j_2\text{ distinct primes}}} \hspace*{-.5cm}  \biggl\vert \biggl\{\omega \in \mathbb{F}_{q^{m/(j_1j_2)}}^* \mid \gcd\biggl( \frac{q^m-1}{\vert\omega\vert}, t_0 \biggr) =1 \biggr\} \biggr\vert \\
				& \qquad + \hspace*{-.5cm}  \sum_{\substack{j_1,j_2,j_3 \in J,\\ j_1,j_2,j_3\text{ distinct primes}}} \hspace*{-.5cm} \biggl\vert \biggl\{\omega \in \mathbb{F}_{q^{m/(j_1j_2j_3)}}^* \mid \gcd\biggl( \frac{q^m-1}{\vert\omega\vert}, t_0 \biggr) =1 \biggr\} \biggr\vert \\
				& \qquad \quad - \dots
		\end{align*}
	As we may deduce from Lemma~$\ref{l:more:4}(b)$, a product of distinct primes from~$J$ is a square-free element of~$J$; and moreover, each non-trivial, square-free element of~$J$ is a product of distinct primes from~$J$. Thus,
	\[ \vert A \vert = -\sum_{\substack{j \in J,\\ j \neq 1}} \mu(j) \biggl\vert \biggl\{\omega \in \mathbb{F}_{q^{m/j}}^* \mid \gcd\biggl( \frac{q^m-1}{\vert\omega\vert}, t_0 \biggr) =1 \biggr\} \biggr\vert.\]
	Now, consider an element $j \in J$. Then (by definition) we have
		\begin{align}
			\gcd\biggl( \frac{q^m-1}{q^{m/j}-1},t_0 \biggr) = 1.\label{eq:number1:1:4}
		\end{align}
	Recalling that $t_0 \mid q^m-1$, $\eqref{eq:number1:1:4}$ implies that $t_0 \mid q^{m/j}-1$. Condition $\eqref{eq:number1:1:4}$ also implies that an element $\omega \in \mathbb{F}_{q^{m/j}}^*$ satisfies $\gcd\bigl( (q^m-1)\vert \omega \vert^{-1}, t_0\bigr) =1$ if and only if $\gcd\bigl( (q^{m/j}-1)\vert \omega \vert^{-1}, t_0\bigr) =1$. Hence, 
	\[\vert A \vert = -\sum_{\substack{j \in J,\\ j \neq 1}} \mu(j) \biggl\vert \biggl\{\omega \in \mathbb{F}_{q^{m/j}}^* \mid \gcd\biggl( \frac{q^{m/j}-1}{\vert\omega\vert}, t_0 \biggr) =1 \biggr\} \biggr\vert,\]
	and thus by $\eqref{eq:number1:1:3}$,
		\[\vert R \vert = \sum_{j \in J} \mu(j) \biggl\vert \biggl\{\omega \in \mathbb{F}_{q^{m/j}}^* \mid \gcd\biggl( \frac{q^{m/j}-1}{\vert\omega\vert}, t_0 \biggr) =1 \biggr\} \biggr\vert.\]
	Then Lemma~$\ref{l:more:3}$ (applied to~$G = \mathbb{F}_{q^{m/j}}^*$ and $t = t_0$) yields 
		\[\vert R \vert = \frac{\varphi(t_0)}{t_0} \sum_{j \in J} \mu(j) (q^{m/j}-1),\]
	which combined with Equations~$\eqref{eq:number1:1:1}$, $\eqref{eq:number1:1:2}$ finalises the proof. 
\end{proof}

\begin{proof}[Proof of Theorem~$\ref{t:number1:1}(b)$] 
	Let $r$ be as defined in Lemma~$\ref{l:more:4}(b)$. (Recall that $\mu(n) = 0$ if $n\in \mathbb{N}$ is not square-free.)
	Then according to Lemma~$\ref{l:more:4}(b)$ and Theorem~$\ref{t:number1:1}(a)$ we obtain
		\[N_q^*(m,t) = \frac{\varphi(t)}{mt} \sum_{n\mid r} \mu(n) (q^{m/n}-1),\]
	and thus by $\eqref{eq:N_q(m)}$,
		\begin{align}
			N_q^*(m,t) = \frac{\varphi(t)}{mt} r N_{q^{m/r}}^*(r).\label{eq:number1:1:5}
		\end{align}
	According to Lemma~$\ref{l:poly:5}(b)$ we get $r N_{q^{m/r}}^*(r) \leq (q^{m/r})^r-1 = q^m-1$, which (together with~$\eqref{eq:number1:1:5}$) proves the upper bound for $N_q^*(m,t)$. Further, from Lemma~$\ref{l:poly:5}(a)$ it follows that $r N_{q^{m/r}}^*(r) \geq m N_q^*(m)$. By Lemma~$\ref{l:poly:5}(b)$ we then obtain $r N_{q^{m/r}}^*(r) \geq m(q^m-1)/(m+1)$, which (combined with $\eqref{eq:number1:1:5}$) verifies the lower bound for~$N_q^*(m,t)$.%
\end{proof}

Comparing Lemma~$\ref{l:poly:5}(b)$ with Theorem~$\ref{t:number1:1}(b)$ we see the following. If~$\mathbb{F}_q[x]$ contains monic $t$-hyper-irreducible polynomials of degree~$m$ (for some $m,t\in \mathbb{N}$), then the number of all such polynomials is roughly equal to $\varphi(t)/t$ times the number of all monic irreducible polynomials of degree~$m$ over~$\mathbb{F}_q$.

\subsection{Counting monic almost \emph{t}-hyper-irreducible polynomials}\label{ss:number2}

For $m,t\in \mathbb{N}$, recall the Definition~$\ref{d:num t-hyper-irred}$ of $N_q^*(m,t)$. A formula for $N_q^*(m,t)$ is given in Theorem~$\ref{t:number1:1}(a)$. Further, for a positive integer~$t$ and a prime~$s$, recall the meaning of~$(t)_s$ and $(t)_{s'}$.

\begin{proof}[Proof of Theorem~$\ref{t:number2:1}$]
	Let $T$ be the number of all monic, irreducible polynomials~$f \neq x$ in~$\mathbb{F}_q[x]$ such that $\deg(f) = m$ and $f(x^t)$ contains an irreducible factor of degree~$e$. According to the assumption, we have
			\[ \gcd(t, q) = 1 \quad \text{and} \quad \ord{q}{(q^m-1)t} =e. \]
	Since $t=1$ yields the contradiction $e = \ord{q}{q^m-1} = m = mt$, it follows that $t \geq 2$. Moreover, by Theorem~$\ref{t:existence:1}$ we get $T \neq 0$.

	By Lemma~$\ref{l:ord:1}(b)$, $\ord{q}{q^m-1}$ divides~$\ord{q}{(q^m-1)t}$, that is $m\mid e$.
	Further, by Lemma~$\ref{l:ord:4}$ (applied to $a = q$, $r = q^m-1$),
		\[ s \text{ is an odd prime}, \quad s \nmid q^m-1.\]
	Let $\mathfrak{T}$ be the set of all monic, irreducible polynomials~$f\in \mathbb{F}_q[x]$ such that $\deg(f) = m$ and $f(x^t)$ contains an irreducible factor of degree~$e$. Observe that $x \notin \mathfrak{T}$. (This is because, for $t \geq 2$, the irreducible factors of $x^t$ have degree~$1 \leq mt/2$.) By definition,
		\[T = \vert \mathfrak{T}\vert.\]
	We prove the assertion by showing that $\mathfrak{T}$ is the set of all monic, $(t)_{s'}$-hyper-irreducible polynomials~$f \neq x$ of degree~$m$ over~$\mathbb{F}_q$.
	
	To this end, consider a polynomial $f \in \mathfrak{T}$ and let $\omega$ be a root of~$f$. By Lemma~$\ref{l:poly:1}(d)$, $\omega$ lies in~$\mathbb{F}_{q^m}^*$, whence $\vert \omega \vert$ divides $q^m-1$. In particular, $\gcd(q,\vert\omega\vert)=1$. Thus (recalling that $t$ and $q$ are coprime) we get $\gcd(q,\vert \omega \vert t) = 1$. Further by Lemma~$\ref{l:poly:3}$ and Proposition~$\ref{p:existence:2}$ we see that $\ord{q}{\vert \omega \vert} \!= m$ and $\ord{q}{\vert \omega \vert t} \!= e$. Then according to Lemma~$\ref{l:ord:4}$ (applied to $a=q$ and $r = \vert \omega \vert$) we obtain $\ord{q}{\vert \omega \vert (t)_{s'}} = m(t)_{s'}$. By Corollary~$\ref{c:existence:3}(a)$ this means that $f$ is $(t)_{s'}$-hyper-irreducible.
	
	Conversely, let $f\neq x$ be a monic, $(t)_{s'}$-hyper-irreducible polynomial of degree~$m$ over~$\mathbb{F}_q$. Let $\omega$ be a root of~$f$. Again, by Lemma~$\ref{l:poly:1}(d)$ the order of $\omega$ divides $q^m-1$. Recalling that $s \nmid q^m-1$, it follows that $s \nmid \vert \omega \vert$, and thus by Lemma~$\ref{l:ord:1}(c)$,
		\[\ord{q}{\vert \omega \vert t} = \lcm\bigl\{ \ord{q}{\vert \omega \vert (t)_{s'}}, \ord{q}{(t)_s} \bigr\}.\]
	As we may deduce from Corollary~$\ref{c:existence:3}(a)(b)$, we have
		\[\ord{q}{\vert \omega \vert (t)_{s'}} = \ord{q}{(q^m-1) (t)_{s'}}.\] Hence, 
		\[\ord{q}{\vert \omega \vert t} = \lcm\bigl\{ \ord{q}{(q^m-1) (t)_{s'}}, \ord{q}{(t)_s} \bigr\},\]
	which (recalling that $s \nmid q^m-1$) by Lemma~$\ref{l:ord:1}(c)$ simplifies to
		\[\ord{q}{\vert \omega \vert t} = \ord{q}{(q^m-1)t}.\]
	Thus, $\ord{q}{\vert \omega \vert t} = e$, and then $f\in \mathfrak{T}$ by Proposition~$\ref{p:existence:2}$.
\end{proof}

In the following example, we apply Theorems~$\ref{t:number1:1}(a), \ref{t:number2:1}$ in order to determine the number of certain almost $99$-hyper-irreducible polynomials over~$\mathbb{F}_5$.

\begin{example}\label{e:number2:2}
	Let $q = 5$, $m =5$, and $t = 99$. According to Example~$\ref{e:existence:4}(a)$ there exists a monic, irreducible polynomial~$f \in \mathbb{F}_5[x]$ such that $\deg(f) = 5$ and $f(x^{99})$ has an irreducible (over~$\mathbb{F}_5$) factor of degree~$330$.\\
	Since $99/\gcd(330/5,99) = 3$, by Theorem~$\ref{t:number2:1}$ the number of all such polynomials is given by $N_5^*(5, (99)_{3'}) = N_5^*(5,11)$, which according to Theorem~$\ref{t:number1:1}(a)$ is equal to $\varphi(11) (5^5-1)/55 = 568$.
	
	For comparison: By $\eqref{eq:N_q(m)}$ the number of all monic, irreducible polynomials of degree~$5$ in~$\mathbb{F}_5[x]$ equals $N_5^*(5) = (5^5 - 5^1)/5 = 624$.
\end{example}

\section*{Acknowledgements}

The results presented here have been obtained during the author's PhD candidature as a cotutelle student at the University of Western Australia and the RWTH Aachen University, partly supported by a scholarship awarded by the Studienstiftung des deutschen Volkes (German Academic Scholarship Foundation) and the Australian Research Council Discovery Project DP190100450.
 The author thanks her supervisors Cheryl Praeger and Gerhard Hi\ss{} for valuable discussions, careful reading of her work, and many extremely helpful comments.

\end{document}